\newtheorem{theorem}{Theorem}[section]
\newtheorem{lemma}[theorem]{Lemma}
\newtheorem*{theorem*}{Theorem}
\newtheorem{corollary}[theorem]{Corollary}
\newtheorem* {question*}{Question}
\theoremstyle{definition}
\theoremstyle{remark}
\numberwithin{equation}{section}
\newcommand{\cl}{{\rm cl}}
\newcommand{\Mq}{\mathcal{M}_q}
\begin{document}

\title[$GF(q)$-Chordal Matroids]{Chordal Matroids arising from Generalized parallel connections}

\author{James Dylan Douthitt}
\address{Mathematics Department\\
Louisiana State University\\
Baton Rouge, Louisiana}
\email{jdouth5@lsu.edu}

\author{James Oxley}
\address{Mathematics Department\\
Louisiana State University\\
Baton Rouge, Louisiana}
\email{oxley@math.lsu.edu}

\subjclass{05B35, 05C75}
\date{\today}

\begin{abstract}
A graph is chordal if every cycle of length at least four has a chord. In 1961, Dirac characterized chordal graphs as those graphs that can be built from complete graphs by repeated clique-sums. Generalizing this, we consider the class of simple $GF(q)$-representable matroids that can be built from projective geometries over $GF(q)$ by repeated generalized parallel connections across projective geometries. We show that this class of matroids is closed under induced minors. We characterize the class by its forbidden induced minors; the case when $q=2$ is distinctive.
\end{abstract}

\maketitle

\section{Introduction}
\label{introduction}
The notation and terminology in this paper will follow \cite{diestel} for graphs and \cite{ox1} for matroids. Unless stated otherwise, all graphs and matroids considered here are simple. Thus every contraction of a set from a matroid is immediately followed by the simplification of the resulting matroid. A \textit{chord} of a cycle $C$ in a graph $G$ is an edge $e$ of $G$ that is not in $C$ such that both vertices of $e$  are vertices of $C$. A graph is \textit{chordal} if every cycle of length at least four has a chord. Such graphs were called \textit{rigid circuit graphs} by Dirac \cite{dirac} and \textit{triangulated graphs} by Berge \cite{berge}. Let $G_1$ and $G_2$ be graphs and $V(G_1)\cap V(G_2)=V$, say. Assume that $G_1[V]$ is a complete graph $H$ and $G_2[V]$ has edge set $E(H)$. The \textit{clique-sum} of $G_1$ and $G_2$ is the graph with vertex set $V(G_1)\cup V(G_2)$ and edge set $E(G_1)\cup E(G_2)$. Loosely speaking, the clique-sum is obtained by gluing $G_1$ and $G_2$ together across the clique $H$. While there are several characterizations of chordal graphs (see, for example, \cite{rose}), we choose to focus on the following one of Dirac \cite{dirac}.
\begin{theorem}\label{theo1}
A graph $G$ is chordal if and only if $G$ can be constructed from complete graphs by repeated clique-sums.
\end{theorem}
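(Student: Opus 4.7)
The plan is to prove both directions by induction, writing $\mathcal{C}$ for the class of graphs built from complete graphs by iterated clique-sums.

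For the backward direction I would induct on the number of clique-sums used to build $G \in \mathcal{C}$. Complete graphs are vacuously chordal. Suppose $G$ is the clique-sum of chordal graphs $G_1$ and $G_2$ along a clique with vertex set $W = V(G_1) \cap V(G_2)$, and let $C$ be a cycle of length at least four in $G$. If $C$ lies entirely in one $G_i$, it has a chord by hypothesis. Otherwise $C$ visits both $V(G_1) \setminus W$ and $V(G_2) \setminus W$, and since each edge of $C$ lies in $E(G_1) \cup E(G_2)$ the cycle passes through $W$ at least twice; any two non-consecutive visits $u, v$ to $W$ are joined by the edge $uv$ (since $W$ induces a clique in $G$), which is a chord of $C$.

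For the forward direction I would invoke the classical fact that every non-complete chordal graph $G$ has a \emph{clique cutset}: a set $S \subseteq V(G)$ such that $G[S]$ is complete and $G - S$ is disconnected. Granting this, partition $V(G) \setminus S$ as $A \cup B$, where $A$ and $B$ are unions of components of $G - S$. Then $G$ is the clique-sum of the proper induced subgraphs $G[A \cup S]$ and $G[B \cup S]$; both are chordal (induced subgraphs of chordal graphs are chordal) with fewer vertices than $G$, so the inductive hypothesis places them in $\mathcal{C}$, whence $G \in \mathcal{C}$. If $G$ itself is complete, it already lies in $\mathcal{C}$.

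The main obstacle is establishing the clique-cutset lemma, and here is where I expect most of the work to live. I would fix non-adjacent vertices $a, b \in V(G)$ and take $S$ to be an inclusion-minimal $a$-$b$ separator. For any $x, y \in S$, minimality of $S$ forces each of $x$ and $y$ to have a neighbor in both the $a$-component $C_a$ and the $b$-component $C_b$ of $G - S$; choose a shortest $x$-$y$ path $P_a$ with interior in $C_a$, and similarly $P_b$ with interior in $C_b$. Their union is a cycle of length at least four. Chordality forces a chord, but the shortest-path property of $P_a$ and $P_b$ rules out chords internal to either path, and no chord can link an interior vertex of $P_a$ to one of $P_b$ without contradicting that $S$ separates $C_a$ from $C_b$. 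The chord must therefore be $xy$, so $S$ is a clique.
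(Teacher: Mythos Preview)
Your proof is correct and is essentially Dirac's original argument: the backward direction by showing clique-sums preserve chordality, and the forward direction via the lemma that every minimal vertex separator in a chordal graph is a clique, proved by the shortest-path cycle trick you describe. One small point worth tightening in the backward direction: you should note explicitly that the two vertices of $W$ on $C$ can be chosen non-consecutive on $C$ (they are separated on the cycle by a vertex of $V(G_1)\setminus W$ on one arc and a vertex of $V(G_2)\setminus W$ on the other), so that $uv$ really is a chord rather than an edge of $C$.

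As for comparison with the paper: there is nothing to compare. Theorem~\ref{theo1} is quoted in the introduction as a classical result of Dirac~\cite{dirac} and is not proved in the paper; it serves only as motivation for the matroid analogues that the paper goes on to establish. Your write-up is the standard proof one would find in Dirac's paper or a graph-theory textbook.
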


Let $M_1$ and $M_2$ be matroids whose ground sets intersect in a set $T$ such that $T$ is a modular flat of $M_1$, and $M_1|T=M_2|T=~N$. The \textit{generalized parallel connection} of $M_1$ and $M_2$ across $N$ is the matroid with ground set $E(M_1) \cup E(M_2)$ whose flats are those subsets $X$ of $E(M_1) \cup E(M_2)$ such that $X\cap E(M_1)$ is a flat of $M_1$, and $X\cap E(M_2)$ is a flat of $M_2$. We denote this matroid by $P_N(M_1,M_2)$ or $P_T(M_1,M_2)$. Note that $T$ may be empty, in which case, $P_T(M_1,M_2)=M_1\oplus M_2$. 

For a prime power $q$, we will denote the projective geometry $PG(r-1,q)$ by $P_r$ when context makes the field clear. Let $\mathcal{M}_q$ be the class of matroids that can be built from projective geometries over $GF(q)$ by a sequence of generalized parallel connections across projective geometries over $GF(q)$. A matroid $M$ is \textit{$GF(q)$-chordal} if $M$ is a member of $\mathcal{M}_q$. By \cite{brylawski}, each member of $\mathcal{M}_q$ is $GF(q)$-representable.

An \textit{induced minor of a graph} $G$ is a graph $H$ that can be obtained from $G$ by a sequence of vertex deletions and edge contractions. Similarly, an \textit{induced minor of a matroid} $M$ is a matroid $N$ that can be obtained from $M$ by a sequence of restrictions to flats and contractions, where each such contraction is followed by a simplification. Evidently, the class of chordal graphs is closed under vertex deletions, that is, it is closed under taking induced subgraphs. The analogous property for the class $\mathcal{M}_q$ is highlighted by the following result. 

\begin{theorem}\label{theo2}
    For all $q$, the class $\mathcal{M}_q$ is closed under taking induced minors.
\end{theorem}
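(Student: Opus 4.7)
My plan is to induct on the minimum number of generalized parallel connections required to construct $M$ as a member of $\Mq$. Since an induced minor is produced by a sequence of restrictions to flats and simplifications of single-element contractions, it suffices to verify that each of these two elementary operations preserves membership in $\Mq$. For the base case, $M$ is itself a projective geometry $P_r$; every flat of $P_r$ is a sub-projective-geometry, and the simplification of any single-element contraction of $P_r$ is $P_{r-1}$, so both outputs lie in $\Mq$.

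For the inductive step, write $M = P_N(M_1, M_2)$ with $N$ a projective geometry over $GF(q)$ and $M_1, M_2 \in \Mq$ built from strictly fewer generalized parallel connections. For restriction to a flat $F$ of $M$, set $F_i = F \cap E(M_i)$ and $N_F = F \cap N$. By the flat characterization in the definition of $P_N(M_1,M_2)$, each $F_i$ is a flat of $M_i$, and $N_F$, being a flat of the projective geometry $N$, is itself a projective geometry over $GF(q)$. I would verify the identity
\[ M|F \;=\; P_{N_F}\!\bigl(M_1|F_1,\; M_2|F_2\bigr), \]
checking that $N_F$ is still a modular flat of $M_1|F_1$ (using the standard fact that a projective-geometry flat is modular in any $GF(q)$-representable matroid containing it). The inductive hypothesis applied to $M_1|F_1$ and $M_2|F_2$ then places $M|F$ in $\Mq$.

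For the contraction-and-simplify operation $\si(M/e)$, I would split into cases by the location of $e$. When $e \in E(M_1) \setminus N$, I would establish
\[ \si(M/e) \;=\; P_{N'}\!\bigl(\si(M_1/e),\; M_2\bigr), \]
where $N'$ denotes the image of $N$ in $\si(M_1/e)$, invoking the induction hypothesis on the first factor; the case $e \in E(M_2)\setminus N$ is symmetric. When $e \in N$, I would show
\[ \si(M/e) \;=\; P_{\si(N/e)}\!\bigl(\si(M_1/e),\; \si(M_2/e)\bigr), \]
noting that $\si(N/e) = P_{s-1}$ and applying the induction hypothesis to each factor. The main obstacle will be the careful bookkeeping in these structural identities for generalized parallel connection under restriction and contraction: one must confirm that the modular-flat hypothesis survives the operation and that simplification commutes appropriately with the $P_N(\cdot,\cdot)$ construction. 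The required deletion/contraction formulas, together with the modularity of projective-geometry flats in $GF(q)$-representable matroids, are available in Brylawski's paper and Oxley's text, and I would cite them to close each case.
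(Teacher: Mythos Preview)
Your proposal is correct and follows the same overall structure as the paper: separate closure under flat-restriction from closure under contraction, and handle each by induction together with Brylawski's formulas describing how generalized parallel connection interacts with these operations. The contraction argument matches the paper's Lemma~\ref{lem3} almost exactly (the paper inducts on $|E(M)|$ rather than on the number of generalized parallel connections, but the content is the same).

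For the restriction step the paper proceeds a little differently in detail. Rather than applying the identity $M|F = P_{N_F}(M_1|F_1,\,M_2|F_2)$ to an arbitrary flat, it first reduces to the case of a hyperplane $H$, then uses rank computations and the modularity of $E(M_1)$ in $P_N(M_1,M_2)$ to show that either $H$ contains one of the $E(M_i)$, or $H$ meets each of $M_1$, $M_2$, and $N$ in a hyperplane; only then does it invoke the Brylawski restriction formula. Your route---observing directly that $N_F$, as a flat of the projective geometry $N$, is itself a projective geometry and hence a modular flat of the $GF(q)$-representable matroid $M_1|F_1$, so that the restriction formula applies immediately for any flat $F$---is a bit more streamlined, at the cost of importing the standard fact that a full projective-geometry flat is modular in any $GF(q)$-representable matroid containing it. Both approaches are valid and rest on the same underlying identity.
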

Our main results are the following characterizations of the forbidden induced minors for the class $\mathcal{M}_q$, first for $q=2$ and then for $q>2$.
\begin{theorem}\label{theo3}
    The set of forbidden induced minors for the class $\mathcal{M}_2$ is $\{M(K_4),U_{3,4}\}$.
\end{theorem}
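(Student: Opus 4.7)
The argument splits into two directions: showing $M(K_4)$ and $U_{3,4}$ are minimal forbidden induced minors for $\mathcal{M}_2$, and conversely showing that every simple matroid with neither as an induced minor lies in $\mathcal{M}_2$.

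For the forward direction, observe first that $U_{3,4}$ is not $GF(2)$-representable and so, by Brylawski's representability result, is not in $\mathcal{M}_2$. Its proper induced minors are $U_{0,0}$, $U_{1,1}$, and $U_{2,3}$, each a projective geometry over $GF(2)$ and hence in $\mathcal{M}_2$. For $M(K_4)$, which is binary, I will argue by contradiction: assume $M(K_4) = P_N(M_1, M_2)$ is a nontrivial decomposition across $N = PG(r-1,2)$, and rule out each rank of $N$ using $r(M) = r(M_1) + r(M_2) - r(N)$. Rank $0$ would force a direct sum, contradicting the connectivity of $M(K_4)$; rank $1$ would produce a $2$-separation, contradicting the $3$-connectivity of $M(K_4)$; rank $2$ would force one of $M_1, M_2$ to equal the three-point line $N$, making the decomposition trivial; and rank $3$ would require $M(K_4)$ to contain the Fano plane as a restriction, which is impossible since $|E(M(K_4))| = 6 < 7$. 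Finally, the proper induced minors of $M(K_4)$ are $U_{0,0}$, $U_{1,1}$, $U_{2,2}$, and $U_{2,3}$, and all lie in $\mathcal{M}_2$.

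For the backward direction, I will induct on $|E(M)|$, with base cases covering matroids of rank at most two. For the inductive step, given a simple matroid $M$ with no $M(K_4)$ or $U_{3,4}$ induced minor, if $M$ is itself a projective geometry over $GF(2)$ then we are done; otherwise, I aim to find a proper modular flat $N$ of $M$ that is itself a projective geometry over $GF(2)$ and across which $M$ decomposes as $P_N(M_1, M_2)$ with $|E(M_1)|, |E(M_2)| < |E(M)|$. Since $\mathcal{M}_2$ is closed under induced minors by Theorem \ref{theo2}, and each $M_i$ can be recovered as an induced minor of $M$, the two summands inherit the forbidden-induced-minor hypothesis and therefore lie in $\mathcal{M}_2$ by induction, so $M \in \mathcal{M}_2$ as well.

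The chief obstacle is the inductive step: exhibiting the separating modular projective-geometry flat $N$. This is the matroid analog of the simplicial-vertex argument driving Dirac's theorem (Theorem \ref{theo1}), and I expect its proof to rest on a structural lemma identifying, in any non-projective-geometry $M$ without $M(K_4)$ or $U_{3,4}$ induced minors, an element or flat whose local closure forms a projective geometry over $GF(2)$ that is modular in $M$. The exclusion of $M(K_4)$ should be the crucial hypothesis, since $M(K_4)$ is, in a precise sense, the minimal binary obstruction to such a separator; once the separator is produced, a careful modularity argument delivers the decomposition.
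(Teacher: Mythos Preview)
Your forward direction contains a concrete error: $U_{3,4}$ \emph{is} $GF(2)$-representable (it is the cycle matroid of $C_4$; take the three standard basis vectors together with their sum over $GF(2)$). The reason $U_{3,4}\notin\mathcal{M}_2$ is not non-representability but roundness: its cocircuits are the four triples, and any two of them meet, so $U_{3,4}$ is round; by Lemma~\ref{lem13} a round member of $\mathcal{M}_2$ of rank $3$ must be $F_7$, which $U_{3,4}$ is not. (You also omit $U_{2,2}$ from its proper induced minors, though this is harmless since $U_{2,2}=U_{1,1}\oplus U_{1,1}\in\mathcal{M}_2$.) Your treatment of $M(K_4)$ is fine in outline.

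The backward direction has a genuine gap, and it is more than just a missing lemma. Your plan is to produce, in any binary non-projective-geometry $M$ avoiding both forbidden induced minors, a proper modular projective-geometry flat across which $M$ splits. But such a flat gives a nontrivial vertical separation of $M$, and round matroids have none. So your scheme presupposes that no round binary matroid other than a projective geometry avoids $M(K_4)$ and $U_{3,4}$ as induced minors --- and that is precisely the heart of the theorem, not a by-product of a simplicial-vertex argument. The paper confronts this directly: it shows (Corollary~\ref{cor1}) that a minimal counterexample $M$ is round, then uses the Geelen--Gerards--Whittle result that contractions of round matroids are round, together with Lemma~\ref{lem13}, to get $M/e\cong P_{r-1}$ for every $e$; Lemma~\ref{lem6} then forces $M\cong P_r\backslash P_{r-i}$, and Lemma~\ref{lem7} locates an $M(K_4)$ or $U_{3,4}$ flat inside any such matroid. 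Your Dirac-style separator idea corresponds only to the non-round case (handled in the paper by Lemma~\ref{lem9}); you still need an argument for the round case, and that is where the real content lies.
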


\begin{theorem}\label{theo4}
    For each $q>2$, the set of forbidden induced minors for the class $\mathcal{M}_q$ is $\{U_{2,k}: 2<k\leq q\}\cup \{U_{3,q+2}\}$.
\end{theorem}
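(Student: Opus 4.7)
The plan is to prove both directions of the forbidden-induced-minor characterization. \emph{Necessity}: each matroid in the list lies outside $\mathcal{M}_q$ while all of its proper induced minors lie inside. \emph{Sufficiency}: any matroid avoiding every listed matroid as an induced minor lies in $\mathcal{M}_q$.

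For necessity, the structural key is the claim that every simple rank-$2$ matroid in $\mathcal{M}_q$ is isomorphic to one of $U_{2,0}$, $U_{2,1}$, $U_{2,2}$, or $U_{2,q+1}$. I plan to prove this by induction on the length of a generalized-parallel-connection construction, using the rank formula $r(P_N(M_1,M_2))=r(M_1)+r(M_2)-r(N)$ and the fact that every rank-$2$ flat of $PG(r-1,q)$ is a copy of $PG(1,q)=U_{2,q+1}$. This immediately gives $U_{2,k}\notin\mathcal{M}_q$ for $2<k\leq q$. For $U_{3,q+2}$, when $q$ is odd the classical bound on arcs in $PG(2,q)$ yields non-$GF(q)$-representability and hence non-membership; when $q$ is even (the hyperoval case), I would run a parallel rank-$3$ inductive analysis, noting that $U_{3,q+2}$ is never a flat of $PG(2,q)$ so cannot arise as a base case, and then checking each nontrivial generalized-parallel-connection case against the rank formula. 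Verifying that the proper induced minors of each listed matroid lie in $\mathcal{M}_q$ is routine: for $U_{2,k}$ only $U_{1,1}$ and the empty matroid appear; for $U_{3,q+2}$, contraction and simplification of any element yields $U_{2,q+1}=PG(1,q)\in\mathcal{M}_q$, and restrictions to proper flats give $U_{2,2}$, $U_{1,1}$, or the empty matroid.

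For sufficiency, I would induct on $|E(M)|$. The induced-minor hypothesis forces every rank-$2$ flat of $M$ to have either at most $2$ elements or exactly $q+1$ elements, and every rank-$3$ flat to avoid a $(q+2)$-arc. The aim is to locate a projective-geometry flat $F\cong PG(k-1,q)$ that is modular in $M$ together with a decomposition $M=P_F(M|E_1,M|E_2)$ in which each factor is a strictly smaller induced minor of $M$; the induction hypothesis then places both factors in $\mathcal{M}_q$ and hence $M\in\mathcal{M}_q$. Closure of $\mathcal{M}_q$ under induced minors (Theorem \ref{theo2}) guarantees that each factor inherits the avoidance hypothesis, so the induction is well-founded.

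The principal obstacle is the construction of this decomposition. Finding a simplicial projective-geometry flat --- the matroid analogue of a simplicial vertex in a chordal graph --- requires extracting a maximal $PG(k-1,q)$ inside $M$ and proving it is modular; both steps rest on the structural constraints imposed by the forbidden induced minors, together with tools about modular flats and round matroids likely developed earlier in the paper. The hypothesis $q>2$ enters essentially here, because larger lines enforce tighter rigidity and eliminate the degenerate configurations that make the $q=2$ case require a separate excluded minor (namely $M(K_4)$ in Theorem \ref{theo3}). The even-$q$ subcase, in which $U_{3,q+2}$ is a hyperoval and is $GF(q)$-representable, is the subtlest point in both the necessity and sufficiency arguments, and will demand a careful analysis of how hyperovals could or could not sit inside generalized-parallel-connection decompositions over projective-geometry flats.
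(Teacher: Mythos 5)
Your necessity direction is essentially sound and close to what the paper does: the induction showing that the simple rank-$2$ members of $\mathcal{M}_q$ are only $U_{2,0},U_{2,1},U_{2,2},U_{2,q+1}$, the representability remark for odd $q$, and the rank-$3$ analysis for the hyperoval case (where one can note that $U_{3,q+2}$ is round and has no $(q+1)$-point line, so it cannot arise as a nontrivial generalized parallel connection) all go through, as does the check on proper induced minors. The genuine gap is in sufficiency, where your text is a plan rather than a proof: the existence of a modular projective-geometry flat $F$ with $M=P_F(M|E_1,M|E_2)$ is essentially the content of the theorem itself, and you defer it (your ``principal obstacle'' paragraph) without an argument. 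The concrete failure point is the round case: if $M$ avoids the listed minors but has no vertical $k$-separation for any $k$, then there is no flat along which to decompose, and your induction on $|E(M)|$ has no mechanism at all for concluding $M\cong P_r$. This is not a degenerate corner case --- it is the heart of the problem, since a minimal excluded induced minor turns out to be exactly such a round matroid.

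The paper resolves precisely the two points your sketch leaves open, with machinery of a different shape. First, Lemma \ref{lem9} shows that for an exact vertical $k$-separation $(X,Y)$ with both $M|\cl(X)$ and $M|\cl(Y)$ in $\mathcal{M}_q$, the intersection $\cl(X)\cap\cl(Y)$ is a rank-$(k-1)$ projective geometry unless a member of $\mathcal{N}=\{U_{2,3},\dots,U_{2,q},U_{3,q+2}\}$ appears as an induced minor; Corollary \ref{cor1} then forces an induced-minor-minimal non-$GF(q)$-chordal matroid to be round. Second --- and this step is entirely absent from your proposal --- roundness is preserved under contraction by a theorem of Geelen, Gerards, and Whittle, so by minimality every $M/e$ is a round member of $\mathcal{M}_q$ and hence, by Lemma \ref{lem13}, $M/e\cong P_{r-1}$ for all $e$. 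Lemma \ref{lem8} then proves, via a line-counting argument in the projective plane for the rank-$3$ base case followed by induction on rank, that a non-projective matroid all of whose single-element contractions are projective geometries must contain a member of $\mathcal{N}$ as an induced minor, which contradicts minimality. Any completion of your simplicial-flat plan would have to establish statements of comparable strength (in particular this ``locally projective implies projective or forbidden minor'' lemma, and the modularity of projective-geometry flats exploited via Lemma \ref{lem9}), so as written the proposal identifies the right targets but omits the proofs of its central steps.
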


Several different notions of chordal matroids have been given over the last forty years \cite{barahona, bonin, cordovil, mayhew, ziegler}. Each of these papers, with the exception of \cite{cordovil, mayhew}, focuses primarily on binary matroids. Following Cordovil, Forge, and Klein \cite{cordovil}, we define a simple or non-simple matroid $M$ to be \textit{chordal} if, for each circuit $C$ that has at least four elements, there are circuits $C_1$ and $C_2$ and an element $e$ such that $C_1\cap C_2 =\{e\}$ and $C=(C_1 \cup C_2)-e$.

Cordovil, Forge, and Klein \cite{cordovil} and Mayhew and Probert \cite{mayhew} study the relation between chordal graphs and supersolvable matroids. Such matroids were originally introduced by Stanley \cite{stanley}. A rank-$r$ matroid is \textit{supersolvable} if there is a chain of modular flats $F_1\subseteq F_2\subseteq \dots \subseteq F_r$ in $M$ where the rank of $F_i$ is $i$ for each $i$ in $[r]$. The class $\mathcal{M}_q$ is given in \cite{mayhew} as an example of a class of matroids whose members are supersolvable and form a saturated class of matroids, where a matroid $M$ is \textit{saturated} if, for every modular flat $F$, the restriction $M|F$ has no two disjoint cocircuits.

\section{Preliminaries}
Before beginning the discussions of forbidden induced minors for the class $\mathcal{M}_q$, we show that this class is closed under taking induced minors. We shall use the following well-known property of generalized parallel connections (see, for example, \cite[11.23]{ox1}).

\begin{lemma}\label{lem1} For every flat $F$ of $P_N(M_1,M_2)$,
\[
r(F)=r(F\cap E(M_1))+r(F\cap E(M_2))-r(F\cap E(N)).
\]
\end{lemma}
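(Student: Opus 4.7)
The plan is to prove the formula by constructing an explicit basis of $F$ in $P_N(M_1,M_2)$ and counting its size. Write $F_1=F\cap E(M_1)$, $F_2=F\cap E(M_2)$, and $F_T=F\cap E(N)$. By the definition of the generalized parallel connection, $F_1$ is a flat of $M_1$ and $F_2$ is a flat of $M_2$. Since $E(N)=T$, we have $F_T=F_1\cap F_2=F_1\cap T$, which is the intersection of two flats of $M_1$ and hence a flat of $M_1$ contained in $T$; equivalently, $F_T$ is a flat of $N$.

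The next step is to choose a basis $B_T$ of $F_T$ in $N$, extend it to a basis $B_1$ of $F_1$ in $M_1$, and independently extend it to a basis $B_2$ of $F_2$ in $M_2$. Setting $B=B_1\cup B_2$, any element of $B_1\setminus B_T$ lies in $E(M_1)\setminus T$ and any element of $B_2\setminus B_T$ lies in $E(M_2)\setminus T$, so $B_1\cap B_2=B_T$. Hence $|B|=|B_1|+|B_2|-|B_T|=r(F_1)+r(F_2)-r(F_T)$, which matches the desired right-hand side. It remains to verify that $B$ is a basis of $F$ in $P_N(M_1,M_2)$.

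The spanning claim is immediate: $\cl(B)$ is a flat of $P_N(M_1,M_2)$, so by definition its intersection with $E(M_i)$ is a flat of $M_i$ containing $B_i$, hence containing $F_i$, whence $\cl(B)\supseteq F_1\cup F_2=F$. The main obstacle is independence. Here the modularity of $T$ in $M_1$ is essential: I would appeal to the standard description of the rank function of $P_N(M_1,M_2)$ from \cite{ox1}, which uses precisely the identity $r_{M_1}(X)+r_{M_1}(T)=r_{M_1}(X\cup T)+r_{M_1}(X\cap T)$ to ensure that joining $B_1$ and $B_2$ along $B_T$ produces no spurious dependencies. Concretely, any circuit of $P_N(M_1,M_2)$ contained in $B$ would, via the known structure of circuits in the generalized parallel connection, reduce to a circuit supported within $B_1$ in $M_1$ or within $B_2$ in $M_2$, contradicting the independence of $B_1$ and $B_2$ in their respective matroids. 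Combining independence with the size computation above gives $r(F)=|B|=r(F_1)+r(F_2)-r(F_T)$, as claimed.
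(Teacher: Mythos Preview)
The paper offers no proof of this lemma; it simply records it as well known and cites \cite[11.23]{ox1}. Your proposal therefore already goes beyond what the paper does: the basis construction is the natural one, the count $|B|=r(F_1)+r(F_2)-r(F_T)$ is correct (your observation that $B_i\setminus B_T\subseteq E(M_i)\setminus T$ is valid because $B_T$ already spans $F_T=F_i\cap T$), and the spanning argument is complete.

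The one soft spot is the independence of $B=B_1\cup B_2$. You handle it by appealing either to ``the standard description of the rank function of $P_N(M_1,M_2)$ from \cite{ox1}'' or to the circuit structure. The first is essentially the same citation the paper makes for the whole lemma, so at that step your argument collapses back to a reference; the second is not actually carried out, and the phrase ``reduce to a circuit supported within $B_1$ in $M_1$ or within $B_2$ in $M_2$'' hides real work, since crossing circuits of the generalized parallel connection are not circuits of either $M_i$. If you want a self-contained finish, use two facts the paper already has available: $E(M_2)$ is a flat of $P=P_N(M_1,M_2)$ (immediate from the definition, since $E(M_2)\cap E(M_1)=T$ is a flat of $M_1$), so $\cl_P(B_2)\subseteq E(M_2)$ and hence $\cl_P(B_2)\cap E(M_1)=F_T$; and $E(M_1)$ is a modular flat of $P$ \cite{brylawski}. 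Applying modularity first to the pair $(\cl_P(B_2),E(M_1))$ gives $r_P(B_2\cup E(M_1))=|B_2|+r(M_1)-|B_T|$, and then to $(\cl_P(B),E(M_1))$, together with $r_P(\cl_P(B)\cap E(M_1))\ge |B_1|$, gives $r_P(B)\ge |B_1|+|B_2|-|B_T|=|B|$, so $B$ is independent.
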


\begin{lemma}\label{lem2} The class $\mathcal{M}_q$ of $GF(q)$-chordal matroids is closed under induced restrictions.
\end{lemma}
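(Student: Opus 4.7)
The plan is to induct on the number of generalized parallel connections appearing in a construction of $M$ as an element of $\mathcal{M}_q$. If no connections are used, then $M$ is a projective geometry $PG(r-1,q)$, and every flat of $M$ is a projective geometry over $GF(q)$, so $M|F \in \mathcal{M}_q$.

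For the inductive step, write $M = P_N(M_1,M_2)$ where $M_1$ and $M_2$ lie in $\mathcal{M}_q$ and are constructed with strictly fewer connections, and $N$ is a projective geometry over $GF(q)$. Let $F$ be a flat of $M$, and set $F_i = F \cap E(M_i)$ for $i = 1, 2$. By the defining flat property of a generalized parallel connection, each $F_i$ is a flat of $M_i$, and the inductive hypothesis gives $M_i|F_i \in \mathcal{M}_q$. Since $F_1 \cap E(N)$ is the intersection of the flats $F_1$ and $E(N)$ of $M_1$, it is a flat of $M_1$ lying inside $E(N)$, hence a flat of $N$; the matroid $N' = M_1|(F_1 \cap E(N)) = M_2|(F_2 \cap E(N))$ is therefore itself a projective geometry over $GF(q)$ (possibly empty).

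The next step is to identify $M|F$ with $P_{N'}(M_1|F_1, M_2|F_2)$. Both have ground set $F_1 \cup F_2 = F$, and a set $X \subseteq F$ is a flat of $M|F$ if and only if $X$ is a flat of $M$ contained in $F$; by the definition of the flats of $P_N(M_1,M_2)$, this happens exactly when $X \cap F_i$ is a flat of $M_i|F_i$ for each $i$, which is precisely the flat condition defining $P_{N'}(M_1|F_1, M_2|F_2)$. This expresses $M|F$ as a generalized parallel connection of two members of $\mathcal{M}_q$ across a projective geometry over $GF(q)$, which closes the induction.

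The hardest part will be verifying that $N'$ is a modular flat of $M_1|F_1$ (equivalently, of $M_2|F_2$), since without this the generalized parallel connection $P_{N'}(M_1|F_1,M_2|F_2)$ is not defined. Modularity does not in general descend to flat-restrictions, so one cannot simply push the modularity of $E(N)$ in $M_1$ down to $E(N')$ in $M_1|F_1$. Instead I would appeal to the structural fact that every projective-geometry flat of a simple $GF(q)$-representable matroid is automatically a modular flat; as $N'$ is a projective geometry over $GF(q)$ and $M_1|F_1$ is $GF(q)$-representable (being a restriction of a member of $\mathcal{M}_q$), this supplies the modularity required to complete the argument.
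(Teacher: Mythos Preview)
Your argument is correct. The identification $M|F = P_{N'}(M_1|F_1, M_2|F_2)$ via the lattice-of-flats description goes through exactly as you say, and the modularity of $N'$ in $M_1|F_1$ does follow from the standard fact that a flat of a simple $GF(q)$-representable matroid which is itself a full projective geometry over $GF(q)$ is automatically modular (embed $M_1|F_1$ in $PG(r-1,q)$; such a flat equals its projective closure, and modularity descends from the ambient projective geometry).

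Your route differs from the paper's in two ways. First, you induct on the length of a construction of $M$ and treat an arbitrary flat $F$ directly, whereas the paper inducts on rank (via a minimum-rank counterexample) and reduces to the case where $F$ is a hyperplane $H$. Second, the paper handles the hyperplane case by explicit rank arithmetic: using the modularity of $E(M_1)$ in $M$ to force $H\cap E(M_i)$ to be a hyperplane of $M_i$ and $H\cap E(N)$ to be a hyperplane of $N$, and then invoking \cite{brylawski} (equivalently, \cite[Proposition~11.4.15]{ox1}) for the identity $M|H = P_{M|H_N}(M|H_1, M|H_2)$ as a black box. Your argument essentially \emph{re-derives} that identity from the flat description and isolates the one nontrivial ingredient (modularity of the projective-geometry intersection), so it is more self-contained; the paper's version, by contrast, makes the rank relations among $H$, $H_1$, $H_2$, $H_N$ completely explicit, which is convenient if one later wants quantitative control rather than just membership in $\mathcal{M}_q$.
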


\begin{proof}
It is enough to show that the class $\mathcal{M}_q$ is closed under restricting to a hyperplane. Let $M$ be a minimum-rank matroid in $\mathcal{M}_q$ such that $M|H\not\in \mathcal{M}_q$ for some hyperplane $H$ of $M$. Then $M$ is not a projective geometry, so $M=P_N(M_1,M_2)$ where $M_1$ and $M_2$ are in $\mathcal{M}_q$, and $N$ is a projective geometry over $GF(q)$. Let $H_i=H\cap E(M_i)$ for each $i$ in $\{1,2\}$ and let $H_N=H\cap E(N)$. Then, since $H$ is a both flat of a generalized parallel connection and a hyperplane of $M$,
\begin{equation}
r(M_1)+r(M_2)-r(N)-1=r(H)=r(H_1)+r(H_2)-r(H_N).
\end{equation}

Suppose $H$ contains all of $E(N)$. Then $r(H_N)=r(N)$, so, from $(2.1)$, we have
\[
r(M_1)+r(M_2)-1=r(H_1)+r(H_2).
\]
This implies that, for some $i$ and $j$ such that $\{i,j\}=\{1,2\}$, the hyperplane $H$ contains $E(M_i)$, and $H_j$ is a hyperplane of $M_j$. It follows by the minimality of $M$ that $M|H$ is in $\mathcal{M}_q$, a contradiction.

We now assume that $H$ does not contain $E(N)$. Hence $H$ does not contain $E(M_1)$ or $E(M_2)$. By \cite{brylawski}, $E(M_1)$ is a modular flat of $P_N(M_1,M_2)$, so 
\begin{equation}
    r(H)=r(H\cup E(M_1))+r(H_1)-r(M_1).
\end{equation}

As $E(M_1)\not \subseteq H$, we deduce that $r(H\cup E(M_1))>r(H)$, so $r(H\cup E(M_1))=r(H)+1$. It follows from $(2.2)$ that $r(H_1)=r(M_1)-1$. By symmetry, $r(H_2)=r(M_2)-1$. Hence $H_i$ is a hyperplane of $M_i$ for each $i$ in $\{1,2\}$. Therefore, by $(2.1)$ and $(2.2)$,
\begin{flalign*}
r(M_1)+r(M_2)-r(N)-1= r(M_1)-1+r(M_2)-1-r(H_N).
\end{flalign*}
Thus $r(H_N)=r(N)-1$. Hence $H_N$ is a hyperplane of $N$, so $M|H_N$ is a projective geometry. By \cite{brylawski} (see also \cite[Proposition 11.4.15]{ox1}), $M|H=P_{M|H_N}(M|H_1,M|H_2)$, so $M|H$ is a generalized parallel connection of members of $\mathcal{M}_q$ along a projective geometry and is therefore a member of $\mathcal{M}_q$, a contradiction. 
\end{proof}

In the next result, where we show that $\mathcal{M}_q$ is closed under contractions, we shall make repeated use of the fact that every contraction is followed by a simplification. We observe that a consequence of this result is that $\Mq$ is closed under parallel minors.

\begin{lemma}\label{lem3} The class $\mathcal{M}_q$ of $GF(q)$-chordal matroids is closed under taking contractions. 
\end{lemma}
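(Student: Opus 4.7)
The plan is to mimic the minimum-rank-counterexample strategy of Lemma \ref{lem2}. Since contracting a set can be performed one element at a time (with simplification after each step), it suffices to show that $\Mq$ is closed under contracting a single element. Suppose for a contradiction that $M$ is a matroid in $\Mq$ of minimum rank such that $M/e \notin \Mq$ for some $e \in E(M)$. If $M$ were a projective geometry over $GF(q)$, then $M/e$ (simplified) would be a projective geometry of one lower rank and hence in $\Mq$; so $M = P_N(M_1, M_2)$ for some $M_1, M_2 \in \Mq$ and some projective geometry $N$ over $GF(q)$. The decomposition may be taken to be nontrivial (else $M$ equals $M_1$ or $M_2$), so that $r(M_i) < r(M)$ for each $i \in \{1, 2\}$.

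I would then split on the location of $e$. If $e \in E(N)$, the standard commutation of generalized parallel connection with contraction of an element of the modular common restriction gives $M/e = P_{N/e}(M_1/e, M_2/e)$. After simplification, $N/e$ is a projective geometry over $GF(q)$ of one lower rank (or empty, when $r(N) = 1$, yielding a direct sum), and by minimality both $M_1/e$ and $M_2/e$ lie in $\Mq$; hence $M/e \in \Mq$, a contradiction. If instead $e \in E(M_1) \setminus E(N)$, I would first note that no element of $E(M_1) \setminus \{e\}$ becomes parallel in $M/e$ to an element of $E(M_2) \setminus E(N)$: such a parallel pair would arise from a triangle $\{e, f, g\}$ with $g \in E(M_2) \setminus E(N)$, forcing $g \in \cl_M(E(M_1)) = E(M_1)$, a contradiction. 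Thus simplification affects only the $M_1$-side, and I would establish $M/e = P_N(M_1/e, M_2)$. By minimality $M_1/e \in \Mq$, so this again yields $M/e \in \Mq$, the desired contradiction.

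The main obstacle lies in this second case: verifying that $N$ remains a modular flat of the simplified $M_1/e$ and that the identity $M/e = P_N(M_1/e, M_2)$ holds. Modularity is preserved because any element $f \in \cl_{M_1}(N \cup \{e\}) \setminus (N \cup \{e\})$ is forced by modularity of $N$ in $M_1$ (a rank computation applied to the flats $N$ and $\cl_{M_1}(\{e, f\})$) to lie on a triangle with $e$ and some $g \in N$, and hence is identified with $g$ during simplification. The matroid identity then follows from the flat-description of generalized parallel connections, in the same spirit as the restriction argument at the end of Lemma \ref{lem2}.
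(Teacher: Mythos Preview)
Your proposal is correct and follows essentially the same route as the paper: decompose $M=P_N(M_1,M_2)$, split on whether $e\in E(N)$ or $e\in E(M_1)\setminus E(N)$, and use the standard identities $M/e=P_{N/e}(M_1/e,M_2/e)$ and $M/e=P_N(M_1/e,M_2)$ (the paper cites Brylawski for these rather than arguing them directly). The only differences are cosmetic: the paper inducts on $|E(M)|$ rather than rank, and simply asserts that $M_1/e$ has $N$ as a restriction, whereas you spell out why simplification does not disturb $N$ or its modularity---extra care that is welcome but not strictly needed given the Brylawski reference.
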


\begin{proof} Let $M$ be a $GF(q)$-chordal matroid. We argue by induction on $|E(M)|$ that $M/e$ is a $GF(q)$-chordal matroid for all $e$ in $E(M)$. The result is certainly true if $|E(M)|\leq 2$. Suppose the result holds when $|E(M)|<k$ and let $|E(M)|=k$. The result holds if $M$ is a projective geometry, so we may assume that $M=P_N(M_1,M_2)$, where $M_1$ and $M_2$ are in $\mathcal{M}_q$, and $N$ is a projective geometry over $GF(q)$. Suppose that $e\in E(M_1)-E(N)$. Then $M_1/e$ is in $\Mq$ by the induction hypothesis and $M_1/e$ has $N$ as a restriction. As $M/e=P_N(M_1/e,M_2)$, we deduce that $M/e$ is in $\mathcal{M}_q$. We may now assume that $e\in E(N)$. This implies that $e$ is in both $E(M_1)$ and $E(M_2)$, and, by the induction hypothesis, $M_1/e$ and $M_2/e$ are in $\mathcal{M}_q$ and contain the projective geometry $N/e$ as a restriction. By \cite{brylawski}, $M/e=P_{N/e}(M_1/e,M_2/e)$, and we again get that $M/e$ is in $\mathcal{M}_q$.
\end{proof}

\begin{proof}[Proof of Theorem \ref{theo2}] This is an immediate consequence of combining  Lemmas~\ref{lem2}~and~\ref{lem3}.
\end{proof}

\section{Characterizing $GF(q)$-chordal matroids}

In this section, we prove Theorems \ref{theo3} and \ref{theo4}. An \textit{induced-minor-minimal non-$GF(q)$-chordal matroid} is a $GF(q)$-representable matroid that is not a $GF(q)$-chordal matroid such that every proper induced minor of $M$ is a $GF(q)$-chordal matroid. When $q=2$, the only rank-2 binary matroids are $U_{2,2}$ and $U_{2,3}$ both of which are $GF(2)$-chordal. Clearly, neither $M(K_4)$ nor $U_{3,4}$ is $GF(2)$-chordal. It follows that each is an induced-minor-minimal non-$GF(2)$-chordal matroid. When $q>2$, the matroids in $\{U_{2,3}, U_{2,4},\dots, U_{2,q}\}$ are induced-minor-minimal non-$GF(q)$-chordal matroids.

For the remainder of this paper, it will be convenient to view a $GF(q)$-representable matroid $M$ of rank $r$ as a restriction of $P_r$ by coloring the elements of $E(M)$ \textit{green} and coloring the other elements \textit{red}. Note that when we contract an element $e$ from $M$, we can obtain $M/e$ as follows. Take a hyperplane $H$ of $P_r$ that avoids $e$. Then project from $e$ onto $H$. Because $e$ is green, an element $z$ of $H$ is green in the contraction precisely when there are at least two green points on the line $\cl_{P_r}(\{e,z\})$ of $P_r$. 

For a positive integer $k$, a partition $(X,Y)$ of the ground set of a matroid $M$ is a \textit{vertical $k$-separation} of $M$ if $r(X)+r(Y)-r(M)\leq k-1$ and $\min \{r(X),r(Y)\} \geq k$. This vertical $k$-separation is \textit{exact} if $r(X)+r(Y)-r(M)=k-1$.

\begin{lemma}\label{lem4}
    Let $(X,Y)$ be a vertical $2$-separation in a matroid $M$ such that each of $M|\cl (X)$ and $M|\cl(Y)$ is in $\Mq$. Then either
    \begin{enumerate}[label=(\roman*)]
    \item $|\cl(X)\cap \cl(Y)|=1$ and $M\in \Mq$; or 
    \item $|\cl(X)\cap \cl(Y)|=0$ and $M$ has $U_{3,4}$ and $U_{2,3}$ as induced minors.
    \end{enumerate}
\end{lemma}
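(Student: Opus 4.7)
The plan begins with a submodularity estimate: since $\cl(X)\cup\cl(Y)\supseteq E(M)$, submodularity gives
\[ r(\cl(X)\cap\cl(Y)) \leq r(\cl(X))+r(\cl(Y))-r(M) = r(X)+r(Y)-r(M) \leq 1. \]
As $M$ is simple and $\cl(X)\cap\cl(Y)$ is a flat of $M$, this forces $|\cl(X)\cap\cl(Y)|\leq 1$, matching the dichotomy in the statement. A single-point intersection further forces the separation to be exact, $r(X)+r(Y)-r(M)=1$, a fact I will use below.

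For case (i), where $\cl(X)\cap\cl(Y)=\{p\}$, the plan is to show $M=N$ with $N:=P_{\{p\}}(M|\cl(X),M|\cl(Y))$. The right-hand side is defined because every single point is a modular flat, and it lies in $\Mq$ by hypothesis. Both matroids have ground set $\cl(X)\cup\cl(Y)=E(M)$, so it suffices to show they share the same flats. For any flat $F$ of $M$, the set $F\cap\cl(X)$ is a flat of $M|\cl(X)$ (as the intersection of the $M$-flats $F$ and $\cl(X)$, viewed inside $\cl(X)$), and likewise $F\cap\cl(Y)$ is a flat of $M|\cl(Y)$, so $F$ is a flat of $N$. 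The converse direction is the main technical obstacle: given $F$ with $F\cap\cl(X)$ and $F\cap\cl(Y)$ both flats of their restrictions, I would apply Lemma~\ref{lem1} to $N$ to compute $r_N(F)$ and match it with $r_M(F)$ via submodularity and the exact rank identity, thereby ruling out any $e\in\cl_M(F)-F$.

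For case (ii), where $\cl(X)\cap\cl(Y)=\emptyset$, I induct on $r(X)+r(Y)$, assuming the separation is exact; the alternative $r(X)+r(Y)=r(M)$ gives a direct sum, placing $M$ in $\Mq$ and so falling outside the intended conclusion of (ii). The base case is $r(X)=r(Y)=2$ and $r(M)=3$. A rank-$2$ member of $\Mq$ is either $U_{2,2}$ or $U_{2,q+1}$: if either $M|\cl(X)$ or $M|\cl(Y)$ equalled $U_{2,q+1}$, that side would be a full projective line of $P_r$, and the two lines $\cl_{P_r}(X)$, $\cl_{P_r}(Y)$ would meet at a green point lying in both $\cl_M(X)$ and $\cl_M(Y)$, contradicting disjointness. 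Hence both restrictions are $U_{2,2}$, so $M$ consists of four elements in rank $3$ with no three collinear, i.e., $M=U_{3,4}$; contracting any element then yields $U_{2,3}$.

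For the inductive step in case (ii), suppose $r(X)>2$ (the case $r(Y)>2$ is symmetric) and contract any $e\in X$. Because $e\notin\cl_M(Y)$, we have $\cl_{M/e}(Y)=\cl_M(Y)$ and $\cl_{M/e}(X-\{e\})=\cl_M(X)-\{e\}$, preserving disjointness; the ranks $r_{M/e}(X-\{e\})=r(X)-1\geq 2$ and $r_{M/e}(Y)=r(Y)$ keep the separation exact. By Lemma~\ref{lem3}, $(M|\cl(X))/e\in\Mq$, while $M|\cl(Y)\in\Mq$ is unchanged, so the inductive hypothesis applies to $M/e$ and supplies $U_{3,4}$ and $U_{2,3}$ as induced minors of $M/e$, hence of $M$.
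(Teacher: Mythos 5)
Your submodularity estimate, the exactness observation, and the base case of (ii) are all correct; in fact your base case, which uses the ambient $P_r$ to note that two coplanar lines must meet, so a full $(q+1)$-point green line on one side would force a green point into both closures, is clean and arguably tidier than the paper's corresponding step. Your aside about the non-exact case (direct sum, hence in $\Mq$) correctly flags a genuine imprecision in the statement as written. Case (i), however, is only a plan, not a proof: the converse inclusion of flats is exactly the standard fact that a point in the guts of an exact $2$-separation yields a parallel-connection decomposition, which the paper simply records as an observation; you name the obstacle but leave it unexecuted.

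The genuine gap is the inductive step of (ii). From $e\notin \cl_M(Y)$ you conclude $\cl_{M/e}(Y)=\cl_M(Y)$, but this is false: $e\notin\cl_M(Y)$ only gives $r_{M/e}(Y)=r_M(Y)$, while $\cl_{M/e}(Y)=\cl_M(Y\cup e)-\{e\}$ can absorb elements of $X$, so disjointness of the closures need not survive the contraction. Concretely, over $GF(2)$ take $E(M)=\{e,x_1,x_2,y_1,y_2\}\subseteq PG(3,2)$ with $y_1,y_2,e,x_2$ a basis and $x_1=e+y_1+y_2$ (so the third point of the projective line through $y_1,y_2$ is red). Then $(X,Y)=(\{e,x_1,x_2\},\{y_1,y_2\})$ is a vertical $2$-separation with $M|\cl(X)=U_{3,3}$ and $M|\cl(Y)=U_{2,2}$, both in $\mathcal{M}_2$, and $\cl(X)\cap\cl(Y)=\emptyset$; yet $M/e$ is a triangle $\{x_1,y_1,y_2\}$ plus a coloop, so $x_1\in\cl_{M/e}(Y)$, the closures now meet, and the inductive hypothesis returns alternative (i), namely $M/e\in\mathcal{M}_2$, from which nothing about $M$ follows. (The lemma itself is fine for this $M$: contracting $x_2$ instead gives $U_{3,4}$; but your step fails for the chosen $e$, and ``contract any $e\in X$'' is exactly what you assert.) This is precisely the trap the paper's quite different argument is built to avoid: it chooses crossing circuits $C$ and $D$ minimizing $|C\cap X|$ and $|D\cap Y|$, uses the $2$-sum structure to see that $(C\cap X)\cup(D\cap Y)$ is a circuit, and contracts $|C\cap X|-2$ and $|D\cap Y|-2$ elements in one controlled step, where the minimality of $|C\cap X|$ guarantees that no element of $X$ lands in the closure of $Y$ after contraction; that is what forces both residual lines to be $U_{2,2}$ and the resulting rank-$3$ flat to be $U_{3,4}$. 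To salvage your induction you would need a comparably careful choice of the element to contract; as written, the step does not go through.
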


\begin{proof}
    Observe that if $|\cl(X)\cap \cl(Y)|=1$, then $M$ is the parallel connection of $M|\cl(X)$ and $M|\cl(Y)$, so $M\in \Mq$. Now suppose that $\cl(X)\cap \cl(Y)=\emptyset$. Let $C$ and $D$ be circuits of $M$ each of which meets both $X$ and $Y$ such that $|C\cap X|$ is a minimum and $|D\cap Y|$ is a minimum. Because $M$ can be written as a $2$-sum with basepoint $b$ of matroids with ground sets $X\cup b$ and $Y\cup b$, it follows that $(C\cap X)\cup (D\cap Y)$ is a circuit of $M$. Clearly both $M|\cl(C\cap X)$ and $M|\cl(D\cap Y)$ are in $\Mq$. Let $X_1$ and $Y_1$ be subsets of $C\cap X$ and $D\cap Y$, respectively, such that $|X_1|=|C\cap X|-2$ and $|Y_1|=|D\cap Y|-2$. Then each of $(M|\cl(C\cap X))/X_1$ and $(M|\cl(D\cap Y))/Y_1$ is a rank-2 matroid in $\Mq$. Moreover, in $M/X_1$, there is no element $x$ of $X$ that is in the closure of $Y$ otherwise $(X_1\cup x)\cup (D\cap Y)$ contains a circuit of $M$ that contains $x\cup (D\cap Y)$ and violates the choice of $C$. Hence the rank-2 matroid $(M|\cl(C\cap X))/X_1$, which is in $\Mq$, is not isomorphic to $U_{2,q+1}$. Thus it is isomorphic to $U_{2,2}$.
    By symmetry, $(M|\cl(D\cap Y))/Y_1\cong U_{2,2}$. Hence $(M|\cl((C\cap X)\cup (D\cap Y)))/(X_1\cup Y_1)\cong U_{3,4}$. Thus both $U_{3,4}$ and $U_{2,3}$ are induced minors of $M$.
\end{proof}

In the next result, we denote by $P_{r+1}\backslash P_{r-i}$ the matroid that is obtained from $P_{r+1}$ by deleting the elements of a rank-$(r-i)$ flat. Clearly this matroid does not depend on the choice of the rank-$(r-i)$ flat.

\begin{lemma}\label{lem6} Let $M$ be a binary rank-$(r+1)$ matroid. Then $M/e\cong P_{r}$ for all $e$ in $E(M)$ if and only if $M\cong P_{r+1}\backslash P_{r-i}$ for some $i$ in $\{0,1,\dots, r\}$.
\end{lemma}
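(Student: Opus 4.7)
The plan is to prove both directions by viewing $M$ as a restriction of $P_{r+1}$, using the green/red coloring set up in the preliminaries: elements of $E(M)$ are green and the remaining points of $P_{r+1}$ are red. Because $q=2$, every line of $P_{r+1}$ has exactly three points, so the projection description of contraction tells us that $M/e\cong P_r$ if and only if every line of $P_{r+1}$ through the green point $e$ contains at least one other green point; equivalently, no line has exactly one green point.

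For the $(\Leftarrow)$ direction, suppose $M\cong P_{r+1}\backslash F$ with $F$ a flat of rank $r-i$. Pick any $e\in E(M)$, so $e\notin F$, and any line $\ell$ through $e$. If both of the other two points of $\ell$ lay in $F$, then since $F$ is a flat containing two points of $\ell$ it would contain all of $\ell$, forcing $e\in F$, a contradiction. Hence every line through $e$ has a second green point, so $M/e\cong P_r$.

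For the $(\Rightarrow)$ direction, let $R=E(P_{r+1})-E(M)$ be the set of red points; the main claim is that $R$ is a flat of $P_{r+1}$. If not, then there exist red points $r_1,r_2$ whose third collinear point $e$ is green. The line $\{e,r_1,r_2\}$ is then a line through $e$ with no other green point, so in $P_{r+1}/e\cong P_r$ the point corresponding to this line fails to be green in $M/e$, contradicting $M/e\cong P_r$. Hence $R$ is a flat, and as a flat of the binary projective geometry $P_{r+1}$ it is isomorphic to $P_s$ for some $s\in\{0,1,\ldots,r+1\}$ (with the convention $P_0=\emptyset$). Since $r(M)=r+1$, we cannot have $R=E(P_{r+1})$, so $s\leq r$; setting $i=r-s$ gives $M\cong P_{r+1}\backslash P_{r-i}$ with $i\in\{0,1,\ldots,r\}$.

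The argument is short once this framework is in place, and there is no serious obstacle. The statement is clean precisely because of the $GF(2)$ hypothesis: since every line has exactly three points, closure of $R$ under the ``third point on a line through two red points'' operation is exactly the flat property, so the hypothesis is already enough to force $R$ to be a flat. I expect that over larger fields the corresponding characterization would require further work, which presumably explains why the lemma is stated only in the binary case.
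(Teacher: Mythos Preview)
Your proof is correct and follows the same route as the paper: both show that the red set is closed under taking the third point on a line through two red points, hence is a flat of $P_{r+1}$. The only difference is cosmetic---where the paper cites \cite[Corollary~6.2.6]{ox1} for the $(\Leftarrow)$ direction and \cite{murty} to pass from line-closure to flatness, you supply short direct arguments, which is entirely appropriate in the binary case.
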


\begin{proof} If $M\cong  P_{r+1}\backslash P_{r-i}$ for some $i$ in $\{0,1,\dots, r\}$, then, by, for example, \cite[Corollary 6.2.6]{ox1}, $M/e\cong P_r$ for all $e$ in $E(M)$. 

Now suppose $E(M)\cong P_r$ for each $e$ in $E(M)$. We may assume that $|E(P_{r+1})-E(M)|\geq 2$ otherwise the result certainly holds. Let $x$ and $y$ be distinct elements of $E(P_{r+1})-E(M)$. Then the third element $z$ on the line of $P_{r+1}$ that is spanned by $\{x,y\}$ must also be in $E(P_{r+1})-E(M)$ otherwise $M/z$ is not isomorphic to $P_r$. It follows that, for a basis $X$ of $P_{r+1}\backslash E(M)$, by \cite[Theorem 1]{murty}, $\cl_{P_{r+1}}(X)=E(P_{r+1})-E(M)$. Since $M$ has rank $r+1$, we deduce that $\cl_{P_{r+1}}(X)$ is a projective geometry of rank at most $r$. The lemma follows.
\end{proof}
We omit the straightforward proof of the next result. 

\begin{lemma}\label{lem7} For $r\geq 3$, if $M$ is the binary matroid $P_{r}\backslash P_{r-i}$ for some $i$ in the set $\{1,2,\dots, r-1\}$, then $M$ has a flat isomorphic to either $M(K_4)$ or $U_{3,4}$.
\end{lemma}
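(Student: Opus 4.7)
The plan is to exhibit, inside $M = P_r\setminus F_0$ where $F_0$ is the deleted rank-$(r-i)$ flat, a rank-$3$ flat of the prescribed isomorphism type. Recall that every flat of $M$ has the form $F\setminus F_0$ for some flat $F$ of $P_r$, and that the Fano plane $F_7 = PG(2,2)$ has exactly three isomorphism types of rank-$3$ restrictions that still span it: $F_7$ itself, $M(K_4) = F_7$ minus a point, and $U_{3,4} = F_7$ minus a line (all points of $F_7$, and all lines of $F_7$, being equivalent under its automorphism group). So it suffices to find a plane $\pi$ of $P_r$ such that $\pi\cap F_0$ is either a single point or a $3$-point line of $\pi$, while also checking that $\pi\setminus F_0$ still spans $\pi$ so that it has rank $3$ in $M$.

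For the case $i=1$, the flat $F_0$ is a hyperplane of $P_r$. I would pick any point $p\notin F_0$, fix any line $L$ of $F_0$, and set $\pi = \cl_{P_r}(L\cup\{p\})$; this is a plane not contained in $F_0$, so by modularity in projective space, $r(\pi\cap F_0) = 3+(r-1)-r = 2$, giving $\pi\cap F_0 = L$. Thus $\pi\setminus F_0$ is a $4$-element rank-$3$ flat of $M$ containing no three collinear points (any such triple would determine a second line of $\pi$, which by $F_7$-geometry must meet $L$), hence is isomorphic to $U_{3,4}$.

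For the case $i\geq 2$, I would work in coordinates, writing $F_0=\langle e_1,\ldots,e_{r-i}\rangle$ and setting $p=e_1$, $q=e_{r-i+1}$, $s=e_{r-i+2}$; these are available because $r-i+2\leq r$. The plane $\pi=\cl_{P_r}(\{p,q,s\})$ has seven points, and every point other than $p$ has a nonzero coordinate in position $r-i+1$ or $r-i+2$, so $\pi\cap F_0=\{p\}$. The set $\pi\setminus F_0$ contains the independent triple $\{q,s,p+q\}$ and therefore is a rank-$3$ flat of $M$, visibly isomorphic to $F_7$ minus a point, that is, to $M(K_4)$. The main (modest) obstacle throughout is keeping track of which flats of $\pi$ survive the deletion, so that the resulting matroid has the correct rank and isomorphism type; once the case split $i=1$ versus $i\geq 2$ is made, both arguments reduce to short calculations in the Fano plane.
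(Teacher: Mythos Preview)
The paper omits the proof of this lemma entirely, calling it ``straightforward,'' so there is no authorial argument to compare against. Your proof is correct: the case split on whether the deleted flat $F_0$ has corank one (yielding $U_{3,4}$) or corank at least two (yielding $M(K_4)$) is exactly the natural one, and in each case you correctly identify a rank-$3$ flat of $P_r$ whose intersection with $F_0$ is a line or a single point, respectively, so that the surviving points form the desired matroid as a flat of $M$.
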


For all $q>2$, the only rank-2 members of $\mathcal{M}_q$ are $U_{2,2}$ and $U_{2,q+1}$. Natural obstructions to membership of $\Mq$ are the lines that contain more than two but fewer than $q+1$ points, that is, the collection $\{U_{2,i}:3\leq i \leq q\}$. In rank three, the only matroid that is not in $\Mq$ and has no member of $\{U_{2,i}:3\leq i \leq q\}$ as an induced minor is $U_{3,q+2}$. By Bose \cite{bose}, we note that $U_{3,q+2}$ is representable over $GF(q)$ if and only if $q$ is even. Therefore, the collection $\mathcal{N}=\{U_{2,3},U_{2,4},\dots,U_{2,q},U_{3,q+2}\}$ is contained in the collection of forbidden induced minors for the class $\Mq$. The next result highlights some structure in matroids that have members of $\mathcal{N}$ as induced minors.

\begin{lemma}\label{lem8}
    For some $q>2$, let $M$ be a $GF(q)$-representable matroid having rank at least three. Suppose that $M\not\cong P_{r}$ but that $M/e \cong P_{r-1}$ for all $e$ in $E(M)$. Then $M$ has a member of $\mathcal{N}$ as an induced minor.
\end{lemma}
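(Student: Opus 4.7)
The plan is to reduce to the rank-$3$ case and then perform a double-counting argument on the green points of $M$.

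\textbf{Reduction to rank $3$.} The first step is to pass, when $r\geq 4$, to a rank-$3$ flat $F$ of $M$ such that $M|F$ still satisfies the hypotheses of the lemma but $M|F\not\cong P_3$. View $M$ as a green subset of $P_r$ and pick a red point $r_0$, which exists because $M\not\cong P_r$. For any green point $e_1$, the hypothesis $M/e_1\cong P_{r-1}$ forces a second green point $e_1'$ on the line through $r_0$ and $e_1$, and since $r\geq 4$ a further green point $e_2$ may be chosen off that line. Let $\pi$ be the plane of $P_r$ spanned by $\{e_1,e_1',e_2\}$, and set $F=\pi\cap E(M)$. Because $r_0\in\pi\setminus F$, the flat $F$ satisfies $M|F\not\cong P_3$; and for any $e\in F$, each of the $q+1$ lines of $\pi$ through $e$ is a line of $P_r$ through $e$, so the contraction hypothesis forces each such line to contain a second green point, whence $(M|F)/e\cong P_2$. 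Since an induced restriction is an induced minor, it suffices to handle the rank-$3$ case.

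\textbf{Rank-$3$ argument.} Assume for contradiction that $M$ has rank $3$ and no member of $\mathcal{N}$ as an induced minor. The exclusion of $U_{2,i}$ for $3\leq i\leq q$, combined with the contraction hypothesis (which rules out a line meeting $E(M)$ in exactly one point), forces every line of $P_3$ to meet $E(M)$ in $0$, $2$, or $q+1$ points. I would then count $|E(M)|$ in two ways. Through a red point $r_0$, each of the $q+1$ lines has $0$ or $2$ green points, giving $|E(M)|=2m$ for some $m\leq q+1$. Through a green point $e$, if $k$ of the $q+1$ lines through $e$ are fully green, then $|E(M)|=q+2+k(q-1)$. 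Comparing the two estimates yields $k(q-1)\leq q$, so $k\in\{0,1\}$ because $q\geq 3$.

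The closing step eliminates both possibilities. If $k=1$, then $|E(M)|=2q+1$ is odd, contradicting $|E(M)|=2m$. If $k=0$, then $|E(M)|=q+2$ and every line of $P_3$ meets $E(M)$ in $0$ or $2$ points, so $M\cong U_{3,q+2}$, which is itself in $\mathcal{N}$ and hence an induced minor of $M$, again contradicting the standing assumption.

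\textbf{Main obstacle.} The main subtlety is the rank reduction: one must verify that the plane $\pi$ does in fact contain both green points spanning a rank-$3$ flat and a red point, and that the lemma's contraction hypothesis really descends to $M|F$. Once that reduction is in place, the rank-$3$ parity/counting argument is elementary.
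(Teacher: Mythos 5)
Your proposal is correct, and although it shares the paper's overall skeleton --- reduce to a rank-$3$ flat of $P_r$ and then analyse that plane --- both halves are carried out by genuinely different means. For the reduction, the paper (phrasing it as induction on rank) locates a line $L=\{e,f\}$ of $P_r$ meeting $E(M)$ in exactly two points, which must exist since $M\not\cong P_r$, takes the plane $Q$ spanned by $L$ and a third green point, and derives the contradiction $Q\cong P_3$; you instead build the plane $\pi$ around a \emph{red} point $r_0$, using the contraction hypothesis only to supply the second green point $e_1'$ on the line through $r_0$ and $e_1$, so that $M|(\pi\cap E(M))\not\cong P_3$ is immediate and the rank-$3$ case can be quoted directly rather than refuted --- the two devices (a $2$-point green line versus a red point in the plane) are equally easy to justify. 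The real divergence is in rank $3$: the paper argues synthetically, showing a triangle must lie on a full $(q+1)$-point line, that a second full line passes through $p_1$, and that an auxiliary point $f$ forces every line of the plane to be full, whence $M\cong P_3$, a contradiction; you instead observe that every line of $PG(2,q)$ meets $E(M)$ in $0$, $2$, or $q+1$ points (short lines being excluded by $\mathcal{N}$, singleton intersections by the contraction hypothesis) and double-count: through a red point $|E(M)|=2m\leq 2(q+1)$ is even, while through a green point $|E(M)|=q+2+k(q-1)$, forcing $k\in\{0,1\}$ for $q\geq 3$, with $k=1$ killed by parity and $k=0$ forcing $M\cong U_{3,q+2}\in\mathcal{N}$. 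Your counting argument is shorter, less case-laden, and makes transparent why $U_{3,q+2}$ is the unique rank-$3$ obstruction beyond the short lines; note also that the $k=0$ configuration can genuinely occur only for $q$ even (by Bose, as the paper remarks), but your contradiction never invokes representability, so this costs you nothing. The one point deserving an explicit sentence in a final write-up is the routine verification that contraction commutes with restriction to the flat $\pi\cap E(M)$, i.e.\ that $M/e\cong P_{r-1}$ really does give a second green point on every line of $\pi$ through $e$; this is exactly the green-point criterion the paper sets up before Lemma 3.1, and your sketch uses it correctly.
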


\begin{proof} Suppose $M$ has no member of $\mathcal{N}$ as an induced minor. Suppose $r(M)=3$. Since the contraction of any element would result in a $(q+1)$-point line, $E(M)$ must have at least $q+2$ elements. Moreover, since $M$ is not $U_{3,q+2}$, we deduce that $M$ contains a triangle $\{p_1,p_2,p_3\}$. This triangle must be contained in a full $(q+1)$-point line of $M$, otherwise $M$ would contain a member of $\mathcal{N}$ as an induced restriction. Label this line $\{p_1,p_2,\dots,p_{q+1}\}$. Since $|E(M)|\geq q+2$, we may choose an element $e$ in $E(M)-\{p_1,p_2,\dots,p_{q+1}\}$. If $e$ is unique, then $M/p_1$ is isomorphic to $U_{2,2}$, a contradiction. Without loss of generality, suppose there is a third point on the line $\cl(\{e,p_1\})$. Then this line is a full $(q+1)$-point line. Therefore, there are two full lines meeting at $p_1$. If this were the entire matroid, then $M/p_1$ consists of only two points. Hence there is an additional element $f$ in $M$ not on either of the lines $\cl(\{e,p_1\})$ or $\{p_1,p_2,\dots,p_{q+1}\}$. Each point of the line $\cl(\{e,p_1\})$ together with $f$ defines a line that meets the line $\{p_1,p_2,\dots,p_{q+1}\}$ in a distinct point and so each of these lines is also full. This gives that every line is full except possibly the line $\cl(\{p_1,f\})$. If any of the points on the line $\cl(\{p_1,f\})$ is absent, then, for some $i$ in $[q+1]$, the line $\cl(\{e,p_i\})$ contains only $q$ points, a contradiction. Therefore,  every line of $M$ is full and $M$ must be a projective geometry, a contradiction. Thus the result holds when $r(M)=3$.

Now suppose the result holds for $r(M)<k$ and let $r(M)=k\geq 4$. Let $e$ be an element of $M$. Since $M/e\cong P_{r-1}$, each of the lines of $P_r$ that contains $e$ must contain a second point of $M$. There must be such a line, say $L$, that contains exactly two points of $M$ otherwise every such line has exactly $q+1$ points and $M\cong P_r$, a contradiction. Let $L=\{e,f\}$. Take a point $g$ in $E(M)-L$ and consider the plane $Q=\cl_M(\{e,f,g\})$. Since $M/h\cong P_r$ for every point $h$ of this plane, it follows that $Q/h\cong U_{2,q+1}$. It follows by the induction assumption that $Q\cong P_3$. Hence $L$ is a $(q+1)$-point line, a contradiction. We conclude, by induction, that the lemma holds. 
\end{proof}

\begin{lemma}\label{lem9} Let $M$ be a $GF(q)$-representable matroid and let $(X,Y)$ be an exact vertical $k$-separation of $M$. Suppose that both $M|\cl_M(X)$ and $M|\cl_M(Y)$ are in $\Mq$. Then either $\cl_M(X)\cap \cl_M(Y)$ is a projective geometry of rank $k-1$, or $M$ has a member of $\mathcal{N}$ as an induced minor.
\end{lemma}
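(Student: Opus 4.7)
The plan is to induct on $k$. For the base case $k=2$, apply Lemma~\ref{lem4}: if $|T|=1$ then $T$ is a rank-$1$ projective geometry, and if $|T|=0$ then Lemma~\ref{lem4} yields $U_{2,3}$ as an induced minor of $M$, and since $q>2$ we have $U_{2,3}\in\mathcal{N}$.

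For the inductive step with $k\geq 3$ and $T\neq\emptyset$, pick any $e\in T$. Since $e\in\cl_M(X)\cap\cl_M(Y)$, the ranks of $\cl_M(X)$, $\cl_M(Y)$, and $M$ each drop by one under contraction of $e$, so $(X\setminus\{e\},Y\setminus\{e\})$ is an exact vertical $(k-1)$-separation of $M/e$. By Lemma~\ref{lem3}, both $(\cl_M(X))/e$ and $(\cl_M(Y))/e$ lie in $\mathcal{M}_q$, and their intersection in $M/e$ is $T/e$. Applying the inductive hypothesis, either $M/e$ (hence $M$) has an $\mathcal{N}$-induced minor, or $T/e\cong P_{k-2}$. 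If the former occurs for some $e$, we are done; otherwise $M|T$ is a rank-$(k-1)$ matroid in $\mathcal{M}_q$ with $(M|T)/e\cong P_{k-2}$ for every $e\in T$. When $k\geq 4$, $r(T)\geq 3$, so Lemma~\ref{lem8} applied to $M|T$ yields either $T\cong P_{k-1}$ or an $\mathcal{N}$-induced minor inside $T$.

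For $k=3$ with $T\neq P_2=U_{2,q+1}$, $T$ has rank $2$, and the only rank-$2$ member of $\mathcal{M}_q$ (other than $U_{2,q+1}$) with every contraction a point is $U_{2,2}$. Write $T=\{a,b\}$ and, after relabeling, assume $a\in X$ and $b\in Y$. Pick $x\in X\setminus T$ and examine the rank-$3$ flat $\cl_M(\{a,b,x\})\subseteq\cl(X)$, which lies in $\mathcal{M}_q$. A case analysis of rank-$3$ members of $\mathcal{M}_q$ containing a $U_{2,2}$ flat shows this flat must contain a $(q+1)$-point line meeting $\{a,b\}$: the only alternative, $U_{3,3}$, is incompatible with the requirement $r(X)\geq 3$ together with $X\cap Y=\emptyset$ and $b\notin X$. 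Using this $(q+1)$-line together with the analogous structure on the $Y$-side, contracting a suitable element of one of these lines produces a rank-$3$ quotient in which the image of $\cl_{P_r}(T)$ acquires enough additional green points from the two projected structures to realize a line with exactly three green points, an induced $U_{2,3}\in\mathcal{N}$.

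The remaining case $T=\emptyset$ is handled by adapting the argument of Lemma~\ref{lem4}: pick a red point $p$ in the rank-$(k-1)$ ambient intersection $\cl_{P_r}(X)\cap\cl_{P_r}(Y)$, choose minimal $C_X\subseteq X$ and $C_Y\subseteq Y$ with $p\in\cl_{P_r}(C_X)\cap\cl_{P_r}(C_Y)$ so that $C_X\cup C_Y$ is a circuit of $M$, and contract all but a suitable subset of its elements to reach a rank-$2$ or rank-$3$ induced minor forcing a member of $\mathcal{N}$. The principal obstacle is the $k=3$, $T=U_{2,2}$ case: since Lemma~\ref{lem8} requires rank at least $3$, the clean inductive reduction fails, and an explicit structural analysis of rank-$3$ $\mathcal{M}_q$ matroids with a $U_{2,2}$ flat—together with a careful choice of contracted element—is required to manufacture the $U_{2,3}$ restriction.
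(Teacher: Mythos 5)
Your base case and your treatment of $T\neq\emptyset$ with $k\geq 4$ track the paper's argument closely (the paper runs a minimal-counterexample induction rather than explicit induction on $k$, but the content is the same: contract a point of $T$, keep the sides in $\Mq$ via Lemma~\ref{lem3}, conclude $(M|T)/z\cong P_{k-2}$ for every $z\in T$, and finish with Lemma~\ref{lem8}). The first genuine gap is your $T=\emptyset$ case. Your plan rests on the claim that $C_X\cup C_Y$ is a circuit, but that claim came, in Lemma~\ref{lem4}, from the basepoint $2$-sum decomposition available exactly when $k=2$; across a vertical $k$-separation with $k\geq 3$ no such structure exists, and nothing in your sketch controls the green points created by the contractions, which is the whole difficulty (in Lemma~\ref{lem4} that control came from the minimality of $|C\cap X|$). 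The paper argues quite differently: when $r(X)>k$ it contracts $e\in X$, preserving an exact vertical $k$-separation, so that by minimality the previously all-red flat $\cl_{P_r}(X)\cap\cl_{P_r}(Y)$ becomes a projective geometry of the contraction; it then takes a red line $L$ inside it (this is where $k\geq 3$ enters), notes that each line $\cl_{P_r}(\{e,z_i\})$ carries a second green point, that $L$ blocks any full green line in the plane $\cl_{P_r}(L\cup e)$, and that $M|\cl_M(X)\in\Mq$ forbids lines with $3,\dots,q$ green points, so the plane is an induced $U_{3,q+2}$ (a $4$-circuit contradiction when $q=2$); the boundary case $r(X)=r(Y)=k$ needs a separate argument via a non-coloop element lying on a $(q+1)$-point green line that must meet the red intersection. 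None of these ideas appears in your proposal, and your substitute does not constitute a proof.

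The second issue is the case you rightly flag as the principal obstacle, $k=3$ with $T\cong U_{2,2}$; to your credit, this case is also not covered by the paper's own closing appeal to Lemmas~\ref{lem7} and~\ref{lem8}, which require $r(T)=k-1\geq 3$. But your proposed resolution fails: you may not assume $a\in X$ and $b\in Y$ (both points of $T$ can lie on one side), and the claim that $\cl_M(\{a,b,x\})$ must contain a $(q+1)$-point line because "$U_{3,3}$ is incompatible with $r(X)\geq 3$" is unjustified and in fact false. Concretely, let $M_2=P_{P_1}(P_2,P_2)$ be two full lines $L_3,L_4$ of a plane $\pi_2$ of $P_4$ meeting at $p'$, with $a\in L_3-p'$ and $b\in L_4-p'$, and add a green coloop $c$ outside $\pi_2$, so $M=M_2\oplus U_{1,1}\in\Mq$. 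Then $(X,Y)=(\{a,b,c\},\,E(M_2)-\{a,b\})$ is an exact vertical $3$-separation, $M|\cl_M(X)\cong U_{3,3}$ and $M|\cl_M(Y)=M_2$ are both in $\Mq$, and $\cl_M(X)\cap\cl_M(Y)=\{a,b\}\cong U_{2,2}$; yet $M$, being in $\Mq$, has no member of $\mathcal{N}$ as an induced minor. So in this case no local analysis can manufacture a $U_{2,3}$: the obstruction is to the statement itself as you (and the paper) have set it up, not merely to your argument, and any correct treatment must either exclude this configuration using extra hypotheses available in the intended application (as in Corollary~\ref{cor1}, where the separation can be chosen advantageously in a minimal non-$GF(q)$-chordal matroid) or weaken the conclusion for $k=3$.
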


\begin{proof}
    This is immediate if $k=1$ and follows by Lemma \ref{lem4} when $k=2$, so we may assume that $k\geq 3$. Let $M$ be an induced-minor-minimal counterexample and let $r(M)=r$. Suppose first that $\cl_M (X)\cap \cl_M (Y)$ is empty. Then in the green-red coloring of $P_r$ corresponding to $M$, all of the elements of $\cl_{P_r} (X)\cap \cl_{P_r} (Y)$ are red. Take $e$ in $X$ and suppose that $r(X)>k$. Then $M/e$ has an exact vertical $k$-separation $(X',Y')$ corresponding to $(X-e,Y)$. By the minimality of $M$, we deduce that $\cl_{M/e}(X')\cap \cl_{M/e}(Y')$ is a projective geometry of rank $k-1$.

    Since $k\geq 3$, there is a projective line $L$ contained in $\cl_{P_r}(X)\cap \cl_{P_r}(Y)$. In the green-red coloring of $P_r$, every element of $L$ is red. But every element of $L$ is green in the coloring of $P_r/e$. Thus, in $P_r$, for each of the points $z_1,z_2,\dots,z_{q+1}$ of $L$, there is a green point on the line $\cl_{P_r}(\{e,z_i\})$ other than $e$. Thus, when $q=2$, we see that the four green points in $\cl_{P_r}(L\cup e)$ form a 4-circuit, a contradiction as $M|\cl_M(X)$ is $GF(2)$-chordal. When $q>2$, because all of the elements of $L$ are red, the set of points in $\cl_{P_r}(L\cup e)$ contains no line with more than $q$ points. Since $M|\cl_M(X)$ is in $\Mq$, it follows that each line in $\cl_{P_r}(L\cup e)$ that contains at least two green points contains exactly two green points. It follows that $M$ has $U_{3,q+2}$ as an induced restriction, a contradiction.

    We may now assume that $r(X)=k=r(Y)$. Since $(X,Y)$ is an exact $k$-separation, $r(X)+r(Y)-r(M)=k-1$, so $r(M)=k+1$. As $M$ has at least $2k$ elements, it has an element $f$ that is not a coloop. Then the construction of members of $\Mq$ implies that $f$ is on a $(q+1)$-point green line of $M$. This line must meet $\cl_{P_r}(X)\cap \cl_{P_r}(Y)$ so $\cl_{P_r}(X)\cap \cl_{P_r}(Y)$ is not entirely red, a contradiction.

    We conclude $\cl_M (X)\cap \cl_M (Y)$ contains at least one point, say $z$. In $M/z$, there is an exact vertical $(k-1)$-separation $(X'',Y'')$ corresponding to $(X-z,Y-z)$. We deduce, by the minimality of $M$, that $\cl_{P_r/z}(X'')\cap \cl_{P_r/z}(Y'')$ is a projective geometry of rank $k-2$. By Lemmas \ref{lem7} and \ref{lem8}, we deduce that $M|\left(\cl_M (X)\cap \cl_M (Y)\right)$ must have, as an induced minor, a matroid that is not in $\Mq$. 
\end{proof}

\begin{corollary}\label{cor1}
    For all $k\geq 1$, an induced-minor-minimal non-$GF(q)$-chordal matroid has no vertical $k$-separations.
\end{corollary}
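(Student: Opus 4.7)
The plan is to reach a contradiction via Lemma~\ref{lem9}. Suppose $M$ is an induced-minor-minimal non-$GF(q)$-chordal matroid admitting a vertical $k$-separation $(X,Y)$. I would first replace $k$ by $k' := r(X)+r(Y)-r(M)+1$ in order to make $(X,Y)$ exact; this is legitimate because every vertical $k$-separation satisfies $r(X),r(Y)<r(M)$, which forces $k'\leq \min\{r(X),r(Y)\}$, and $r(X)+r(Y)\geq r(M)$ gives $k'\geq 1$. Because $\cl(X),\cl(Y)\subsetneq E(M)$, the restrictions $M|\cl(X)$ and $M|\cl(Y)$ are proper induced minors of $M$, so both lie in $\Mq$ by minimality. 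The hypotheses of Lemma~\ref{lem9} are now in place.

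Lemma~\ref{lem9} yields two cases, each of which I would turn into a contradiction. In the first case, $M$ has some $N\in\mathcal{N}$ as an induced minor. I would rule out $N=M$ by a short rank check: the inequalities $\min\{r(X),r(Y)\}\geq k$ and $r(X)+r(Y)-r(M)\leq k-1$ together force $r(M)\geq k+1$ and $r(X),r(Y)<r(M)$, and none of the matroids in $\mathcal{N}$ (all of rank at most $3$ and of size at most $q+2$) can satisfy these inequalities simultaneously for any $k\geq 1$. Hence $N$ is a \emph{proper} induced minor of $M$, so $N\in\Mq$ by minimality, contradicting the fact that the members of $\mathcal{N}$ are forbidden from $\Mq$.

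In the remaining case, $T:=\cl(X)\cap\cl(Y)$ is a projective geometry of rank $k-1$. The step that I expect to be the principal obstacle is identifying $M$ with the generalized parallel connection $P_T(M|\cl(X),M|\cl(Y))$. I would first note that, since $T$ is a projective-geometry restriction of the $GF(q)$-representable matroid $M$, it is a modular flat of $M$ by \cite{brylawski}, so the generalized parallel connection is well-defined. I would then invoke the decomposition theory from \cite{brylawski} (see also \cite[Proposition~11.4.15]{ox1}), using Lemma~\ref{lem1} and the exact rank equality $r(M)=r(\cl(X))+r(\cl(Y))-r(T)$ to match rank functions and conclude $M=P_T(M|\cl(X),M|\cl(Y))$. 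Once this identification is established, $M$ is exhibited as a generalized parallel connection of two members of $\Mq$ across a projective geometry over $GF(q)$, so $M\in\Mq$, contradicting the assumption that $M$ is not $GF(q)$-chordal.
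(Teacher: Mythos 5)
Your proposal follows exactly the route the paper intends: the paper states Corollary~\ref{cor1} with no written proof, as an immediate consequence of Lemma~\ref{lem9}, and the implicit argument is the one you give --- pass to an exact separation, use minimality to place $M|\cl(X)$ and $M|\cl(Y)$ in $\Mq$, and in the projective-geometry case identify $M$ with $P_T(M|\cl(X),M|\cl(Y))$, a decomposition the paper itself invokes (citing Brylawski) in Section~4. Your exactness reduction is correct: if $r(Y)=r(M)$ then $r(X)=r(X)+r(Y)-r(M)\leq k-1$, contradicting $r(X)\geq k$, so indeed $r(X),r(Y)<r(M)$ and $k'=r(X)+r(Y)-r(M)+1$ works. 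For the gluing step, note only that you need $T$ modular in $M|\cl(X)$ rather than in $M$ (the same fact --- a projective-geometry restriction of a $GF(q)$-representable matroid is a modular flat --- applies verbatim), and that $E(M)=\cl(X)\cup\cl(Y)$ is automatic since $X\cup Y=E(M)$.

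The genuine gap is your ``short rank check'' ruling out $N=M$. Pure rank and size inequalities do not rule this out: a rank-$3$ matroid can satisfy $r(M)\geq k+1$ and $\min\{r(X),r(Y)\}\geq k$ with $k=2$ by taking $r(X)=r(Y)=2$. What actually excludes a vertical $2$-separation of $U_{3,q+2}$ is structural: every rank-$2$ subset of $U_{3,q+2}$ has at most two elements, so such a separation would force $q+2\leq 4$, i.e., $q\leq 2$; combined with the roundness of the lines $U_{2,j}$ for $3\leq j\leq q$ (a rank-$1$ part of a simple matroid is a single point), this repairs your case~1 for all $q>2$. For $q=2$, however, the gap cannot be repaired, because the statement itself fails there as literally written: the set $\mathcal{N}$ degenerates to $\{U_{3,4}\}$ (this is what Lemma~\ref{lem4}(ii) supplies), and $U_{3,4}$ genuinely has the exact vertical $2$-separation $(\{a,b\},\{c,d\})$ --- its cocircuits are the $2$-subsets, so it is not round --- while the paper certifies $U_{3,4}$ as an induced-minor-minimal non-$GF(2)$-chordal matroid (Theorem~\ref{theo3}). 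So when Lemma~\ref{lem9} returns $U_{3,4}$ as an induced minor, the possibility that it equals $M$ survives, and the correct conclusion of your argument is the weaker statement (which still suffices for the proofs of Theorems~\ref{theo3} and~\ref{theo4}): an induced-minor-minimal non-$GF(q)$-chordal matroid with a vertical $k$-separation must be isomorphic to $U_{3,4}$, which can occur only when $q=2$. Your instinct to check whether $N=M$ was exactly right; the check as performed is insufficient for $q>2$ and false for $q=2$.
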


A matroid $M$ is \textit{round} if $M$ has no two disjoint cocircuits. Equivalently, $M$ is round if there is no $k$ for which $M$ has a vertical $k$-separation (see, for example, \cite[Lemma 8.6.2]{ox1}). The following result is immediate from the constructive definition of $GF(q)$-chordal matroids.

\begin{lemma}\label{lem13}
    A rank-$r$ matroid $M$ in $\Mq$ is round if and only if $M\cong P_r$.
\end{lemma}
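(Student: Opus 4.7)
The plan is to verify both implications. For the backward direction, I would show directly that $P_r$ is round: any two hyperplanes of $P_r$ are rank-$(r-1)$ flats and, by the modularity inherent in a projective geometry, they meet in a rank-$(r-2)$ flat. Hence their union is a proper subset of $E(P_r)$, so the two complementary cocircuits have nonempty intersection, and $P_r$ has no two disjoint cocircuits.

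For the forward direction, I would argue the contrapositive. Assume $M \in \Mq$ has rank $r$ but is not isomorphic to $P_r$. Unpacking the constructive definition of $\Mq$ (and discarding trivial steps of the form $P_N(N,M_b) = M_b$ from the construction sequence), we may write $M = P_N(M_1,M_2)$ where $M_1, M_2 \in \Mq$, $N$ is a common projective-geometry restriction of both, and $r(M_i) > r(N)$ for each $i \in \{1,2\}$. The goal is then to exhibit two disjoint cocircuits of $M$.

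For each $i$, since $E(N)$ is a proper flat of $M_i$, I would pick a hyperplane $H_i$ of $M_i$ with $E(N) \subseteq H_i$. Then $H_1 \cup E(M_2)$ is a flat of $M$: its intersections with $E(M_1)$ and $E(M_2)$ are $H_1$ and $E(M_2)$ respectively, both of which are flats, so the claim follows from the flat characterization of the generalized parallel connection. By Lemma \ref{lem1}, the rank of $H_1 \cup E(M_2)$ equals $(r(M_1) - 1) + r(M_2) - r(N) = r(M) - 1$, so it is a hyperplane of $M$. Hence its complement $E(M_1) - H_1$ is a cocircuit of $M$ contained in $E(M_1) - E(N)$. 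By symmetry, $E(M_2) - H_2$ is a cocircuit of $M$ contained in $E(M_2) - E(N)$, and these two sets are disjoint. This contradicts roundness.

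The main, though mild, obstacle is the reduction step at the start of the forward direction: one must be careful to produce a \emph{nontrivial} decomposition $M = P_N(M_1, M_2)$ with $r(M_i) > r(N)$ for both $i$, using the sequence of generalized parallel connections that realizes $M$ and noting that for simple matroids a proper extension of a flat strictly increases rank. Once this is in hand, the rank computation for the candidate hyperplane $H_1 \cup E(M_2)$ and the disjointness of the two resulting cocircuits are immediate from Lemma \ref{lem1} and the fact that $E(M_1) \cap E(M_2) = E(N) \subseteq H_1 \cap H_2$.
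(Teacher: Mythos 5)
Your proof is correct and follows exactly the route the paper intends: the authors state Lemma~\ref{lem13} without proof as ``immediate from the constructive definition,'' and your argument simply fleshes out that intended reasoning, namely that any nontrivial generalized parallel connection $P_N(M_1,M_2)$ yields the disjoint cocircuits $E(M_1)-H_1$ and $E(M_2)-H_2$ via Lemma~\ref{lem1}, while $P_r$ itself is round since no two hyperplanes cover it. Your care over the reduction to a nontrivial decomposition (discarding steps with $E(M_i)=E(N)$) is a worthwhile detail the paper leaves tacit.
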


The following is a straightforward consequence of the definition.
\begin{lemma}
    A simple binary matroid is chordal if and only if it does not have $U_{3,4}$ as an induced minor.
\end{lemma}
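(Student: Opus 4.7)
The plan rests on a chord lemma for simple binary matroids that I will establish first: a circuit $C$ of a simple binary matroid $N$ with $|C|\geq 4$ admits a chord in the Cordovil--Forge--Klein sense if and only if $\cl_N(C)\supsetneq C$. The ``only if'' is immediate since any chord element $e$ must satisfy $e\in\cl_N(C_1\setminus\{e\})\subseteq\cl_N(C)$. For the converse I will use the binary cycle space: given $e\in\cl_N(C)\setminus C$, the restriction $N|(C\cup\{e\})$ has $2$-dimensional cycle space, and the fundamental circuit $C_1=S_1\cup\{e\}$ of $e$ with respect to the basis $C\setminus\{a\}$ together with $C_2=C\bigtriangleup C_1=(C\setminus S_1)\cup\{e\}$ will give the chord decomposition. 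To see $C_2$ is itself a circuit, note that any component of its disjoint-circuit decomposition avoiding $e$ would lie in the independent set $C\setminus S_1\subsetneq C$, which is impossible.

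With the chord lemma in hand, the backward direction is immediate: if $M$ has no $U_{3,4}$ induced minor and $C$ is a circuit with $|C|=k\geq 4$ and $\cl_M(C)=C$, then $M|C\cong U_{k-1,k}$ would be a flat-restriction of $M$, and contracting any $k-4$ of its elements would yield $U_{3,4}$ as an induced minor---a contradiction. Hence every circuit of size $\geq 4$ satisfies $\cl_M(C)\supsetneq C$ and thus has a chord.

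For the forward direction the plan is to show that chordal simple binary matroids are closed under induced minors; combined with $U_{3,4}$ being itself non-chordal, this will yield the result. Closure under restriction to a flat $F$ is immediate since any chord $e$ for a circuit $C\subseteq F$ lies in $\cl_M(C)\subseteq F$. The main obstacle, and the heart of the proof, will be closure under contraction-plus-simplification. Suppose $M$ is chordal and $M'=\si(M/y)$ has a closed circuit $C'=\{x_1,\ldots,x_k\}$ of size $k\geq 4$; let $F=C'\cup\{y\}\cup P$ be the corresponding flat of $M$, where $P=\bigcup_i P_i$ collects the elements parallel to $x_i$ in $M/y$. Binary simplicity forces $|P_i|\leq 1$, with the unique $p_i\in P_i$ (when nonempty) equal to $x_i+y$.

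I will split into two cases according to whether $C'$ is already a circuit of $M$, or whether $C:=C'\cup\{y\}$ is a circuit of $M$ with $C'$ independent. In the first case, $y\notin\cl_M(C')$ (forced by the rank computation $r_M(C'\cup\{y\})=k$), and the identity $p_i=x_i+y$ together with $x_i\in\cl_M(C')$ and $y\notin\cl_M(C')$ gives $p_i\notin\cl_M(C')$ for every $p_i\in P$; combined with $\cl_M(C')\subseteq F$ this yields $\cl_M(C')=C'$, so $C'$ is a closed $k$-circuit of $M$, contradicting chordality. In the second case $C$ is a circuit of $M$, forcing $y=x_1+\cdots+x_k$ in binary: if $P=\emptyset$ then $F=C$ so $C$ itself is a closed $(k+1)$-circuit; otherwise I pick $p\in P_i$ (WLOG $i=1$, so $p=x_1+y=\sum_{j\geq 2}x_j$) and form $C''=C\bigtriangleup\{p,x_1,y\}=\{x_2,\ldots,x_k,p\}$. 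The corank-$1$ argument from the chord lemma shows $C''$ is itself a single circuit of $M$, and the identity $\mathrm{span}(C'')=\mathrm{span}(\{x_2,\ldots,x_k\})$ together with the explicit formulas $p_j=x_j+y=\sum_{i\neq j}x_i$ for the remaining elements of $F$ yields $\cl_M(C'')=C''$. Either way $M$ contains a closed circuit of size $\geq 4$. The delicate step will be this last calculation---that $C''$ is closed in the second case---which depends essentially on the binary hypothesis.
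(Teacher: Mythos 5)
Your proposal is correct, and all the steps you flagged as delicate do go through. Note, however, that the paper offers no proof at all to compare against: it asserts the lemma as ``a straightforward consequence of the definition.'' Your write-up supplies the missing argument, and its backbone is exactly the right observation, namely that in a simple binary matroid a circuit $C$ with $|C|\geq 4$ has a chord in the Cordovil--Forge--Klein sense if and only if $\cl(C)\supsetneq C$; your cycle-space verification of the converse is sound (the key point being that every circuit in the decomposition of $C\bigtriangleup C_1$ avoiding $e$ would sit inside the independent set $C\setminus S_1$, so $C\bigtriangleup C_1$ is a single circuit meeting $C_1$ exactly in $e$). This converts chordality into the absence of closed circuits of size at least four, equivalently the absence of flats $F$ with $M|F\cong U_{k-1,k}$, $k\geq 4$, from which the backward direction is immediate by contracting $k-4$ elements. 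Your case analysis for closure under contraction-plus-simplification also checks out: in Case 1 the rank computation $r_M(C'\cup\{y\})=k$ correctly forces $y\notin\cl_M(C')$, and the triangle relation $p_i=x_i+y$ then excludes every $p_i$ from $\cl_M(C')$; in Case 2 with $P\neq\emptyset$, your circuit $C''=\{x_2,\ldots,x_k,p\}$ is indeed a single circuit by the same corank-one argument, and the closure computation is right --- each of $y$, $x_1$, and any $p_j$ with $j\geq 2$ lying in $\mathrm{span}(\{x_2,\ldots,x_k\})$ would contradict the independence of $C'$. One tiny bookkeeping remark: since an induced minor may involve a sequence of contractions, you should say explicitly that single-element contractions followed by simplification compose (i.e.\ $\si(\si(M/y)/x)\cong \si(M/\{y,x\})$), so your one-step closure result iterates; this is routine but worth a sentence.
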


\begin{lemma}
    All $GF(2)$-chordal matroids are chordal matroids.
\end{lemma}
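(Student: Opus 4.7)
The plan is to reduce the statement directly to the immediately preceding lemma, which characterizes simple binary chordal matroids by the absence of $U_{3,4}$ as an induced minor. So for an arbitrary $GF(2)$-chordal matroid $M$, I would establish two things: (a) $M$ is binary; and (b) $M$ has no induced minor isomorphic to $U_{3,4}$. Once both hold, the preceding lemma immediately gives that $M$ is chordal, since all matroids in the paper are simple by convention.

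Claim (a) is immediate from the introduction: by Brylawski's result already cited, every member of $\Mq$ is $GF(q)$-representable, so in particular every member of $\mathcal{M}_2$ is binary. Claim (b) follows from the combination of two facts that are already in hand. First, the paper has explicitly observed that $U_{3,4}$ is not $GF(2)$-chordal, that is, $U_{3,4}\notin \mathcal{M}_2$. Second, by Theorem~\ref{theo2}, the class $\mathcal{M}_2$ is closed under taking induced minors. Together these force every $M\in\mathcal{M}_2$ to avoid $U_{3,4}$ as an induced minor: otherwise, applying induced-minor closure to $M$ would place $U_{3,4}$ itself inside $\mathcal{M}_2$, a contradiction.

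With (a) and (b) in hand, the preceding lemma applies and yields that $M$ is chordal. There is no real obstacle here; the proof is a short deduction built entirely from previously established results (Brylawski's representability fact, Theorem~\ref{theo2}, the observation that $U_{3,4}\notin\mathcal{M}_2$, and the preceding lemma). The only point that requires any care is matching hypotheses, namely that the invoked characterization applies to \emph{simple} binary matroids, which is covered by the paper's blanket convention that all matroids under consideration are simple.
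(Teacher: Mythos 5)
Your proof is correct and is essentially the paper's own argument: the paper likewise combines the induced-minor closure of $\mathcal{M}_2$ (Theorem~2) with the observation that $U_{3,4}\notin\mathcal{M}_2$ to conclude via the preceding lemma that $M$ is chordal. You have merely made explicit the steps (binarity via Brylawski, and the contradiction argument for avoiding $U_{3,4}$) that the paper leaves implicit.
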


\begin{proof} 
Let $M$ be a $GF(2)$-chordal matroid. Since the class of $GF(2)$-chordal matroids is closed under induced minors, $U_{3,4}$ is not an induced minor of $M$, so $M$ is a chordal matroid. 
\end{proof}

Since $M(K_4)$ is chordal but not $GF(2)$-chordal, it is clear that the class of binary matroids that are chordal properly contains the class of $GF(2)$-chordal matroids.

We now give a common proof of the two main results of the paper.

\begin{proof}[Proof of Theorems  \ref{theo3} and \ref{theo4}]
Let $M$ be an induced-minor-minimal non-$GF(q)$-chordal matroid. By Corollary \ref{cor1}, $M$ is round. By Geelen, Gerards, and Whittle \cite{geelen03}, $M/e$ is also round for all $e$ in $E(M)$. Thus by Lemma \ref{lem13}, $M/e\cong P_{r-1}$ where $r(M)=r$.

First, let $q=2$. Then, by Lemma \ref{lem6}, $M\cong P_r \backslash P_{r-i}$ for some $i$ in $\{1,2,\dots, r-1\}$. Moreover, since $M$ is not a $GF(2)$-chordal matroid, $r\geq 3$. Then, by Lemma \ref{lem7}, $M$ has $M(K_4)$ or $U_{3,4}$ as an induced restriction. As each of these matroids is an induced-minor-minimal non-$GF(2)$-chordal matroid, we deduce that $M$ is isomorphic to $M(K_4)$ or $U_{3,4}$.

Now assume that $q>2$. Since each member of $\mathcal{N}$ is an induced-minor-minimal non-$GF(q)$-chordal matroid, we may assume that $M$ has no member of $\mathcal{N}$ as an induced minor. Thus $r(M)\geq 3$. By Lemma \ref{lem8}, we get a contradiction. 
\end{proof}







\section{Dirac's Other Characterization}

In \cite{dirac}, another characterization is given of chordal graphs. In a graph $G$, a \textit{vertex separator} is a set of vertices whose deletion produces a graph with more connected components than $G$. 

\begin{theorem}\label{theo6}
    A graph is chordal if and only if every minimal vertex separator induces a clique.
\end{theorem}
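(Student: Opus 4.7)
The plan is to prove both directions of Theorem \ref{theo6} directly. For the forward direction, I would assume $G$ is chordal and let $S$ be a minimal $a$-$b$ separator for some non-adjacent pair $a, b$, with $A$ and $B$ the components of $G - S$ containing $a$ and $b$ respectively. The first step is to observe that, by minimality, every $v \in S$ has a neighbor both in $A$ and in $B$; otherwise $S \setminus \{v\}$ would already separate $a$ from $b$. Suppose for contradiction that $x, y \in S$ are non-adjacent. Using the first step, I would choose a shortest $x$-$y$ path $P_A$ with internal vertices in $A$, and similarly a shortest path $P_B$ through $B$, and concatenate them to form a cycle.

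The cycle has length at least four because $x$ and $y$ are non-adjacent, so each of $P_A$ and $P_B$ contributes at least one internal vertex. The crucial verification is that the cycle has no chord: a chord between two vertices of $P_A$ (or $P_B$) would contradict the shortest-path choice; a chord between an internal vertex of $P_A$ and an internal vertex of $P_B$ would join a vertex of $A$ to a vertex of $B$, contradicting that $S$ separates them; and any chord incident with $x$ or $y$ reduces to one of the previous cases. This chordless cycle of length at least four contradicts chordality, so $S$ must be a clique.

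For the converse, assume every minimal vertex separator induces a clique, and suppose for contradiction that $G$ contains an induced cycle $C = v_1 v_2 \cdots v_k v_1$ with $k \geq 4$. Since $v_1$ and $v_3$ are non-adjacent, there exists a minimal $v_1$-$v_3$ separator $S$. The two internally disjoint paths on $C$ from $v_1$ to $v_3$, namely $v_1 v_2 v_3$ and $v_1 v_k v_{k-1} \cdots v_4 v_3$, must each be hit by $S$, so $v_2 \in S$ and $v_i \in S$ for some $i \in \{4, \ldots, k\}$. But $C$ is induced, so $v_2 v_i$ is not an edge of $G$, contradicting the hypothesis that $S$ induces a clique.

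The argument is classical and the technical difficulty is minimal. The main subtlety is in the forward direction, where one must carefully verify both that the constructed cycle has length at least four and that it admits no chord, using the case analysis above together with the shortest-path choice of $P_A$ and $P_B$. No tools beyond the definitions of chordality and vertex separator are required.
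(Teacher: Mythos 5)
Your proof is correct, and it is essentially Dirac's classical argument; note, though, that the paper itself gives no proof of Theorem~\ref{theo6} at all --- it simply states the result and cites Dirac \cite{dirac} --- so there is no in-paper proof to compare against, and your write-up fills a gap the authors delegated to the literature. Both directions check out: in the forward direction the key facts (every vertex of a minimal $a$-$b$ separator has neighbours in both components $A$ and $B$, the shortest-path choice kills chords within $P_A$ and $P_B$, and the separation property kills chords between them) are exactly right, and the converse is the standard hitting-set argument applied to the two $v_1$-$v_3$ arcs of an induced cycle.

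One definitional point deserves care. The paper defines a vertex separator as any set whose deletion increases the number of components, and under the literal reading ``minimal'' would mean inclusion-minimal among \emph{all} such sets. Your proof instead uses minimal $a$-$b$ separators for a fixed non-adjacent pair, which is the standard (and correct) reading of Dirac's theorem: with the literal inclusion-minimal reading the statement is actually false --- take $C_4$ with a pendant vertex attached to each cycle vertex; its only inclusion-minimal separators are the four cut-vertex singletons, all trivially cliques, yet the graph contains an induced $4$-cycle. A minimal $a$-$b$ separator need not be inclusion-minimal among all separators (it can properly contain a cut vertex separating some third vertex), so the two notions genuinely differ, and your converse argument needs the pairwise notion in order to apply the clique hypothesis to the minimal $v_1$-$v_3$ separator you construct. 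Since that is the notion your proof consistently uses, the argument is sound; it would just be worth flagging explicitly that ``minimal vertex separator'' must be interpreted relative to a pair of vertices.
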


It is shown in Lemma \ref{lem9} that, if $M$ is a $GF(q)$-chordal matroid, then, for every exact vertical $k$-separation $(X,Y)$ of $M$, the restriction $M|(\cl(X) \cap \cl(Y))\cong P_{k-1}$. However, the converse of this is not true. For example, the matroid $P_{U_{2,3}}(M(K_4),M(K_4))$ is not $GF(2)$-chordal, but the only exact vertical $k$-separation has $k=3$ and has $U_{2,3}$ as the intersection of the closures of the two sides of the vertical $3$-separation. A \textit{divider} in a matroid is an exact vertical $k$-separation for some $k$. A divider $(X,Y)$ is \textit{minimal} if there is no vertical $k'$-separation $(X',Y')$ such that $\cl(X')\cap \cl(Y')\subsetneqq \cl(X)\cap \cl(Y)$. Recall that, for sets $X$ and $Y$ in a matroid $M$, the \textit{local connectivity} between $X$ and $Y$, denoted $\sqcap(X,Y)$ or $\sqcap_M(X,Y)$, is given by $\sqcap(X,Y)=r(X)+r(Y)-r(X\cup Y)$. Let $\mathcal{N}_q$ be the class of $GF(q)$-representable matroids $N$ such that, for every minimal divider $(X,Y)$ of $N$, the matroid $N|(\cl(X)\cap \cl(Y))$ is a projective geometry of rank $\sqcap(X,Y)$. Since round matroids have no vertical $k$-separations, all $GF(q)$-round matroids are in $\mathcal{N}_q$.

\begin{lemma}\label{lem20}
    Let $(X,Y)$ be a minimal divider of a matroid $N$. Then $\cl(X)\cap \cl(Y)=\cl(X)\cap \cl(Y-\cl(X))$.
\end{lemma}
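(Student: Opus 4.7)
The plan is to prove equality by showing both inclusions, one of which is trivial and the other of which exploits the minimality hypothesis.

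The inclusion $\cl(X) \cap \cl(Y-\cl(X)) \subseteq \cl(X) \cap \cl(Y)$ is immediate since $Y - \cl(X) \subseteq Y$ implies $\cl(Y-\cl(X)) \subseteq \cl(Y)$. For the reverse inclusion, I would construct a candidate vertical separation obtained by ``pushing'' the elements of $Y$ that already lie in $\cl(X)$ over to the other side. Specifically, define $X' = X \cup (Y \cap \cl(X))$ and $Y' = Y - \cl(X)$, so $(X',Y')$ is a partition of $E(N)$ with $X \subseteq X' \subseteq \cl(X)$. Then $\cl(X') = \cl(X)$ and $\cl(Y') = \cl(Y-\cl(X)) \subseteq \cl(Y)$, which gives
\[
\cl(X') \cap \cl(Y') \;=\; \cl(X) \cap \cl(Y-\cl(X)) \;\subseteq\; \cl(X) \cap \cl(Y).
\]

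The key step is to verify that $(X',Y')$ is a vertical $k'$-separation for some positive integer $k'$, so that the minimality of the divider $(X,Y)$ can be invoked. Since $(X,Y)$ is a vertical separation, $r(X) \le r(N)-1$ and $r(Y) \le r(N)-1$, and hence $r(X') = r(X) \le r(N)-1$ and $r(Y') \le r(Y) \le r(N)-1$. I also need $Y' \neq \emptyset$; if $Y \subseteq \cl(X)$ then $E(N) \subseteq \cl(X)$, forcing $r(X)=r(N)$ and contradicting the vertical separation hypothesis. Finally, every loop of $N$ lies in $\cl(X)$, so all elements of $Y'$ are non-loops and $r(Y') \ge 1$. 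Choosing $k' = \max\{1,\, r(X')+r(Y')-r(N)+1\}$ then gives a vertical $k'$-separation.

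By the minimality of $(X,Y)$, one cannot have $\cl(X') \cap \cl(Y') \subsetneq \cl(X) \cap \cl(Y)$; combining this with the containment above forces equality, completing the proof.

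The main obstacle is the bookkeeping of verifying the vertical-separation conditions for $(X',Y')$; in particular, ruling out the degenerate case $Y \subseteq \cl(X)$ and observing that no loops can obstruct $r(Y') \ge 1$. Once these are in hand, the argument is essentially a one-line appeal to minimality.
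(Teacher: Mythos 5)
Your proof is correct, and it uses the same underlying idea as the paper --- transfer the elements of $Y\cap \cl(X)$ across the partition and invoke the minimality of the divider --- but the execution is genuinely different. The paper works one element at a time: for each $y\in \cl(X)\cap Y$ it shows $y$ is not a coloop of $N|Y$, since otherwise $(X\cup y,\, Y-y)$ would be a divider whose closure-intersection omits $y$ and so is properly contained in $\cl(X)\cap\cl(Y)$, contradicting minimality; from this it deduces the stronger fact that $\cl(Y)=\cl(Y-\cl(X))$, i.e., that $Y-\cl(X)$ spans $Y$, from which the stated equality is immediate. You instead move all of $Y\cap\cl(X)$ at once, forming $(X',Y')=(X\cup(Y\cap\cl(X)),\, Y-\cl(X))$, and apply minimality directly to this single vertical $k'$-separation. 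Your route has the merit that the verification is fully explicit --- in particular, you correctly note that neither side of a vertical separation can be spanning (if $r(X)=r(N)$ then $r(Y)\leq k-1<k$), which is what makes your choice $k'=\max\{1,\,r(X')+r(Y')-r(N)+1\}$ satisfy $\min\{r(X'),r(Y')\}\geq k'$ --- and it sidesteps the implicit iteration in the paper's ``we deduce'' step (removing the elements of $Y\cap\cl(X)$ one by one while checking the intermediate partitions remain minimal dividers). What the paper's coloop argument buys in exchange is the stronger intermediate conclusion $r(Y)=r(Y-\cl(X))$, whereas you obtain only the intersection equality --- which is all the lemma asserts, so nothing is lost for the application.
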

\begin{proof}
    Suppose $y\in \cl(X)\cap Y$. Then $r(Y)=r(Y-y)$; otherwise, $y$ is a coloop of $N|Y$ and $(X\cup y, Y-y)$ is a divider with $\cl(X\cup y)\cap \cl(Y-y)$ properly contained in $\cl(X)\cap \cl(Y)$, a contradiction. We deduce that $\cl(Y)=\cl(Y-\cl(X))$.
\end{proof}


The next theorem is an analog of Theorem \ref{theo6}.

\begin{theorem}
    A matroid $M$ is in $\mathcal{N}_q$ if and only if $M$ can be constructed from round $GF(q)$-representable matroids by a sequence of generalized parallel connections across projective geometries.
\end{theorem}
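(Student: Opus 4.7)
The plan is to prove both directions by induction, using the structural properties of generalized parallel connections.

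For the sufficiency direction, that every matroid so constructible lies in $\mathcal{N}_q$, I would induct on the number of generalized parallel connections used. A round matroid has no vertical separations and so trivially satisfies the defining condition of $\mathcal{N}_q$. For the inductive step, suppose $M = P_N(M_1,M_2)$ with $M_1, M_2 \in \mathcal{N}_q$ and $N \cong PG(k-2,q)$, and let $(X,Y)$ be a minimal divider of $M$. The canonical partition $(E(M_1), E(M_2) - E(N))$ is itself an exact vertical $k$-separation with $\cl(E(M_1)) \cap \cl(E(M_2) - E(N)) = E(N)$. Minimality of $(X,Y)$ then forces $\cl(X)\cap\cl(Y)$ to not properly contain $E(N)$. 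A case analysis based on where $\cl(X)\cap\cl(Y)$ sits relative to the pieces either (i) places it inside $E(N)$, in which case it is a flat of a projective geometry and hence itself a projective geometry of the required rank, or (ii) forces it to sit inside one of the $E(M_i)$, where it induces a minimal divider of $M_i$ and the conclusion follows from $M_i \in \mathcal{N}_q$. Lemma \ref{lem1} is the key tool for the rank bookkeeping in this case analysis.

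For the necessity direction, I would induct on $r(M)$. When $M$ is round, there is nothing to prove: $M$ is itself a round $GF(q)$-representable matroid. Otherwise $M$ admits a vertical $k$-separation for some $k$, and by selecting one whose intersection $\cl(X)\cap\cl(Y)$ is inclusion-minimal, I obtain a minimal divider $(X,Y)$. Since $M \in \mathcal{N}_q$, the matroid $N := M|(\cl(X)\cap\cl(Y))$ is a projective geometry of rank $\sqcap(X,Y) = k - 1$. Using Lemma \ref{lem20}, I may assume $Y \cap \cl(X) = \emptyset$. Since a projective-geometry flat of a $GF(q)$-representable matroid is a modular flat, Proposition 11.4.15 of \cite{ox1} yields $M = P_N(M|\cl(X), M|\cl(Y))$. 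The rank equation $r(X) + r(Y) = r(M) + k - 1$ together with $r(X), r(Y) \geq k$ gives $r(M|\cl(X)), r(M|\cl(Y)) < r(M)$, so the induction closes once I verify that each restriction still belongs to $\mathcal{N}_q$.

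The main obstacle is this last inheritance step: showing that if $M \in \mathcal{N}_q$ and $M = P_N(M|\cl(X), M|\cl(Y))$ is the decomposition just produced, then each of $M|\cl(X)$ and $M|\cl(Y)$ itself lies in $\mathcal{N}_q$. To handle this, I would take a minimal divider $(A,B)$ of $M|\cl(X)$, and extend it to a candidate divider of $M$ by attaching $E(M)-\cl(X)$ on the side of $N$ opposite to where $N$ meets $\cl(A)$. Lemma \ref{lem1} combined with the modularity of $N$ should give $\cl_M(A) \cap \cl_M(B \cup (E(M) - \cl(X))) = \cl_{M|\cl(X)}(A) \cap \cl_{M|\cl(X)}(B)$, and any strictly smaller intersection realized by a divider of $M$ would descend through the generalized parallel connection to a smaller divider of $M|\cl(X)$, contradicting minimality of $(A,B)$. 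The hypothesis $M \in \mathcal{N}_q$ then forces the intersection to be a projective geometry of rank $\sqcap_{M|\cl(X)}(A,B)$, establishing $M|\cl(X) \in \mathcal{N}_q$; the argument for $M|\cl(Y)$ is symmetric, and the induction completes the proof.
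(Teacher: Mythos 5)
Your high-level architecture is the same as the paper's: both directions by induction, decomposition as $M=P_N(M|\cl(X),M|\cl(Y))$ along a minimal divider, and your lifted divider $(A,\,B\cup(E(M)-\cl(X)))$ is essentially the paper's lift $(E(M_2)\cup\cl_{M_1}(X_1),\,Y_1-\cl_{M_1}(X_1))$. But there is a genuine gap at exactly the point where the paper does its real work, namely your ``constructible implies $\mathcal{N}_q$'' direction. You induct with both pieces merely in $\mathcal{N}_q$, whereas the paper inducts on the number of round matroids used, so that the last-attached piece $M_2$ is round; this is not cosmetic. The roundness of $M_2$ and of $N$, together with Lemma~\ref{lem20}, is what rules out minimal dividers straddling the two pieces: since $N$ is round, one side of the divider spans $N$, and then, since $M_2$ is round, either $Y_2$ spans $M_2$, giving $E(N)\subseteq F$ where $F=\cl(X)\cap\cl(Y)$ and then $F=E(N)$ by comparison with the canonical divider $(E(M_1),E(M_2)-E(N))$, or else $\cl(X)\supseteq E(M_2)$, which pushes the divider entirely into $M_1$. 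Your claimed dichotomy --- (i) $F\subseteq E(N)$, or (ii) $F$ inside one $E(M_i)$ inducing a minimal divider there --- is precisely what must be proved, and nothing in your sketch excludes a minimal divider whose closures meet both $E(M_1)-E(N)$ and $E(M_2)-E(N)$ while containing only part of $E(N)$. Moreover, case (i) is wrong as stated: a flat of $N$ is indeed a projective geometry, but membership in $\mathcal{N}_q$ demands rank exactly $\sqcap(X,Y)$, and your minimality argument only shows that $F$ does not \emph{properly contain} $E(N)$; it does not exclude $F$ being a proper subflat of $N$ of deficient rank. (The paper avoids this by deriving $E(N)\subseteq F$ in its spanning cases before applying minimality.) Finally, ``it induces a minimal divider of $M_i$'' in case (ii) requires the lifting argument the paper supplies --- any smaller divider $(X_1,Y_1)$ of $M_1$, normalized via roundness of $N$ so that $\cl_{M_1}(X_1)\supseteq E(N)$, lifts to a divider of $M$ with the same intersection, contradicting minimality of $(X,Y)$ --- and you invoke that idea only in the other direction.

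In your necessity direction, the lift and the computation $\cl_M(A)\cap\cl_M(B\cup(E(M)-\cl(X)))=\cl_{M|\cl(X)}(A)\cap\cl_{M|\cl(X)}(B)$ parallel the paper's and look workable, but the minimality of the lifted divider rests on your descent claim --- that any divider of $M$ with strictly smaller intersection descends to a divider of $M|\cl(X)$ --- and that descent faces exactly the straddling difficulty above and is left unproven. To be fair, the paper itself is brisk in this direction (it simply iterates the decomposition with ``continuing in this way''), so you have correctly isolated a subtlety the paper glosses over; but a complete blind proof must fill it, for instance by mirroring the detailed case analysis the paper gives for the converse inclusion, and as written your proposal does not.
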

\begin{proof}
    Let $\mathcal{R}_q$ be the class of matroids that can be constructed from round $GF(q)$-representable matroids by a sequence of generalized parallel connections across projective geometries. It suffices to prove that a connected matroid $M\in\mathcal{N}_q$ if and only if it is in $\mathcal{R}_q$. Suppose $M\in\mathcal{N}_q$. If $M$ has no dividers, then $M$ is round, so $M\in\mathcal{R}_q$. 
    Hence, we may assume $M$ has a minimal divider $(X,Y)$. Then $M|(\cl(X)\cap \cl(Y))$ is a projective geometry $N$ of rank $\sqcap(X,Y)$, and $M=P_N(M|\cl(X),M|\cl(Y))$. Letting $M_X=M|\cl(X)$, we see that $M_X$ is either round or has a minimal divider $(U,V)$, where $M_X|(\cl(U)\cap \cl(V))$ is a projective geometry, $N'$, of rank $\sqcap_{M_X}(U,V)$, and $M_X$ is equal to $P_{N'}(M_X|\cl(U),M_X|\cl(V))$. Continuing in this way, we see that every matroid in $\mathcal{N}_q$ can be obtained in the manner prescribed. Hence $\mathcal{N}_q\subseteq \mathcal{R}_q$.

    We will prove that $\mathcal{R}_q\subseteq \mathcal{N}_q$ by induction on the number $n$ of round matroids used to construct a connected member $M$ of $\mathcal{R}_q$. If $n=1$, then $M$ is round and so $M$ is in $\mathcal{N}_q$. Now suppose that the result holds when $n\leq t-1$ and assume that $M$ is constructed by using exactly $t$ round matroids. Then $M\cong P_N(M_1,M_2)$, where $M_2$ is a round matroid and $N$ is a projective geometry. Let $(X,Y)$ be a minimal divider of $M$ and let $F=\cl(X)\cap \cl(Y)$. We need to show that $M|F\cong P_k$ where $r(F)=k$. Let $X_N=X\cap E(N)$ and $Y_N=Y\cap E(N)$. Also let $X_i=(X\cap E(M_i))-X_N$ and $Y_i=(Y\cap E(M_i))-Y_N$ for each $i$ in $\{1,2\}$. Since $N$ is a projective geometry, we may suppose that $X_N$ spans $Y_N$. Therefore $\cl(Y_N)\subseteq F$. As $M_2$ is round and has $(X_N\cup Y_N\cup X_2,Y_2)$ as a partition, either $X_N\cup X_2$ spans $Y_2$, or $Y_2$ spans $M_2$. In the latter case, $Y$ spans $E(N)$, so $E(N)\subseteq F$. Now, $(E(M_1),E(M_2)-E(N))$ is a divider of $M$ and $\cl(E(M_1))\cap \cl(E(M_2)-E(N))=E(N)$. Because $(X,Y)$ is a minimal divider, we have $F=E(N)$, so $M|F\cong P_k$ where $r(F)=k$. 
    
    We deduce that $X_N\cup X_2$ spans $Y_2$. Then $\cl(X)$ contains $E(M_2)$ and, by Lemma~\ref{lem20}, we may assume that $Y\subseteq E(M_1)-E(N)$. Thus 
    \begin{equation}\label{eq1}
        F=\cl(X)\cap \cl(Y)=\cl_{M_1}(X\cap E(M_1))\cap \cl_{M_1}(Y).
    \end{equation}

    We show next that $(X\cap E(M_1),Y)$ is a minimal divider of $M_1$. It is a divider of $M_1$ unless $X\cap E(M_1)$ or $Y$ spans $M_1$. 
    In the exceptional case, as $X$ does not span $M$, we see that $X\cap E(M_1)$ does not span $M_1$. Thus $Y$ spans $M_1$, so $E(N)\subseteq F$. 
    Hence $E(N)=F$, and $M|F\cong P_k$ as desired. Thus $(X\cap E(M_1),Y)$ is a divider of $M_1$. Suppose it is not minimal. 
    Then, by (\ref{eq1}), $M_1$ has a minimal divider $(X_1,Y_1)$ such that $\cl_{M_1}(X_1)\cap \cl_{M_1}(Y_1)\subsetneqq F$. Now we may assume that $\cl_{M_1}(X_1)\supseteq E(N)$, so $(\cl_{M_1}(X_1),Y_1-\cl_{M_1}(X_1))$ is a minimal divider of $M_1$. It follows that $(E(M_2)\cup\cl_{M_1}(X_1))\cap \cl_M(Y_1-\cl_{M_1}(X_1))=\cl_{M_1}(X_1)\cap \cl_{M_1}(Y_1)\subsetneqq F$, a contradiction. Hence $(X\cap E(M_1),Y)$ is a minimal divider of $M_1$. By the induction assumption, $M_1|F\cong P_k$, so $M|F\cong P_k$, as desired.
\end{proof}

\end{document}